\newtheorem{lemma}{Lemma}[section]  
\newtheorem{theorem}[lemma]{Theorem}  
\crefname{lemma}{Lemma}{Lemmas}
\crefname{theorem}{Theorem}{Theorems}
\crefname{proposition}{Proposition}{Propositions}
\crefname{remark}{Remark}{Remarks}
\crefname{problem}{Problem}{Problems}
\crefname{corollary}{Corollary}{Corollaries}
\crefname{definition}{Definition}{Definitions}
\crefname{section}{Section}{Sections}
\def\R{{\mathbb{R}}}
\def\erre{{\mathbb{R}}}
\def\enne{{\mathbb{N}}}
\def\esse{{\mathbb{S}}}
\def\eps{{\varepsilon}}
\title{Quantitative stability of the Rossby--Haurwitz waves of degree two for the Euler equation on $\mathbb{S}^2$}
\author{Matias G. Delgadino}
\address{Department of Mathematics, University of Texas at Austin}
\email{matias.delgadino@math.utexas.edu}
\author{Luca Melzi}
\address{Department of Mathematics, Imperial College London}
\email{l.melzi24@imperial.ac.uk}
\date{\today}
\begin{document}

\begin{abstract}
    We show that the degree-2 Rossby--Haurwitz travelling waves on the Euler equation on $\mathbb{S}^2$ are orbitally stable. Our proof is short, quantitative, and conceptually easy to follow.
\end{abstract}

\maketitle

\section{Introduction}
Our starting point is the incompressible Euler equation in vorticity form on the sphere $\mathbb{S}^2\subset\R^3$:
\begin{align}
\begin{cases}
    \partial_t\omega+\nabla^\perp\psi\cdot \nabla\omega=0&\text{on }[0,\infty)\times\mathbb{S}^2,\\
    \Delta\psi=\omega&\text{on }[0,\infty)\times\mathbb{S}^2,\\
    \omega|_{t=0}=\omega_0&\text{on }\mathbb{S}^2,
    \label{eq:euler_rotating}
\end{cases}
\end{align}
where $\Delta$ is the Laplace--Beltrami operator on $\mathbb{S}^2$, $\nabla^\perp$ is the curl operator, $\psi=\Delta^{-1}\omega$ is the stream function. We refer to \cite{Taylor2016} for long time existence and uniqueness of solutions to \eqref{eq:euler_rotating} under regularity hypothesis of the initial vorticity $\omega_0$.

A natural class of steady states to the Euler equation are given by the eigenfunctions of the Laplacian:
\begin{equation}\label{eq:eigenfunction}
    -\Delta Y_{j}= j(j+1) Y_{j}.
\end{equation}
The object we study are the Rossby--Haurwitz travelling waves, which are obtained by adding a pure rotation to an eigenfunction \eqref{eq:eigenfunction}, see \cite{rossby1939,haurwitz1940} for the classical references and \cite{Costantin2022} for modern take. To be more precise, we denote $R_p^u\in \mathbb{SO}(3)$ the rotation around the vector $p\in\esse^2$ with angle $u\in \R$. For any $\Omega\in \R$, we have that
\begin{equation}\label{eq:RH}
\omega^{RH}_t(x)=\Omega\, p\cdot x+Y_{j}\left(R_p^{-ct}x\right)    
\end{equation}
is a periodic travelling solution, with travelling speed
\begin{equation}\label{eq:travellingspeed}
    c=\Omega\left(\frac{1}{2}-\frac{1}{j(j+1)}\right),
\end{equation}
see \cite[Appendix A]{cao2023stabilitydegree2rossbyhaurwitzwaves}. When the eigenfunction is invariant under the rotation $Y_j(R_p^{-ct}x)=Y_j(x)$, the travelling wave \eqref{eq:RH} is also a steady state of \eqref{eq:euler_rotating}. We will refer to this case as zonal solutions.

From a modelling perspective, the rotation term $\Omega p\cdot x$ in \eqref{eq:RH} represents the Coriolis force arising from the Earth's rotation about the axis $p$ with angular velocity $\Omega$. Rossby--Haurwitz (RH) waves are fundamental in meteorology for describing large-scale atmospheric dynamics \cite{holton2012}. These waves are observed not only in Earth's atmosphere but also in the stratospheres of other planets, including Jupiter, Saturn, Uranus, and Neptune. The mathematical analysis of RH waves provides insight into geophysical phenomena such as the ozone hole and planetary features like Jupiter’s Great Red Spot. Moreover, understanding the stability properties of RH waves is crucial for assessing the reliability of long-term weather prediction models.

The stability of Rossby--Haurwitz travelling wave solutions \eqref{eq:RH} depends heavily on the degree of the eigenfunction \eqref{eq:eigenfunction}. It was shown in \cite{constantin2025onsetinstabilityzonalstratospheric} that the stability of zonal waves of degree 3 depends on the rotation speed $\Omega$. For $\Omega\in(\omega_{cr}^-,\omega_{cr}^+)$ normalized zonal waves of degree 3 are unstable, while they are stable for $\Omega\in \R\setminus (\omega_{cr}^-,\omega_{cr}^+)$. In general, understanding the stability of higher degree zonal or non-zonal waves is an open problem.

In this article we focus on the case of degree 2 waves. Qualitative stability of the degree-2 zonal waves with $\Omega\ne 0$ was shown by Constantin--Germain in \cite{Costantin2022}. One of the main contributions of this paper is to show the quantitative stability of these solutions. More specifically, we show the following statement. 
\begin{theorem}\label{thm:1}
Let $\bar\omega^{RH}$ be a degree-2 zonal wave with $\Omega\ne 0$, then there exists $C, \bar\eps>0$ such that if $\|\omega_0-\bar{\omega}^{RH}\|<\bar\eps$, then 
\begin{equation}\label{eq:1ststability}
    \|\omega_t-\bar{\omega}^{RH}\|_2\leq C \|\omega_0-\bar{\omega}^{RH}\|^{1/2}_2,
\end{equation}
where $\{\omega_t\}_{t\ge 0}$ is the solution to Euler equation in $\esse^2$ with initial condition $\omega_0$ \eqref{eq:euler_rotating}.
\end{theorem}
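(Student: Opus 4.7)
The plan is to decompose the perturbation $\eta_t := \omega_t - \bar\omega^{RH}$ into its spherical-harmonic modes $\eta_t = \sum_{j \ge 1}(\eta_t)_j$ and to bound each mode using the Euler conservation laws: enstrophy $Q(\omega) = \tfrac12\|\omega\|_2^2$, energy $E(\omega) = -\tfrac12\int\psi\,\omega$, angular momentum $M(\omega) = \int x\,\omega\,dS$, and every Casimir $\int f(\omega)\,dS$. Since $\bar\omega^{RH} = \Omega(p\cdot x) + \bar\omega_2$ has only $\ell = 1$ and $\ell = 2$ content and its zonal $\ell=2$ piece is time-independent, it suffices to bound $\|(\eta_t)_1\|_2$, $\|(\eta_t)_2\|_2$ and $\|(\eta_t)_{\ge 3}\|_2$ separately.

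The $\ell = 1$ mode is immediate: the projection $\omega_1(t) = \tfrac{3}{4\pi}M(t)\cdot x$ is determined by the conserved angular momentum, so $\omega_1(t) = \omega_1(0)$ and $\|(\eta_t)_1\|_2 = \|(\eta_0)_1\|_2 \le \|\eta_0\|_2$ for all $t$. For the $\ell \ge 3$ modes, the identity
\[
Q(\omega) - 6 E(\omega) + \tfrac{3}{4\pi}|M(\omega)|^2 \;=\; \sum_{j\ge 3}\tfrac12\!\left(1 - \tfrac{6}{j(j+1)}\right)\|\omega_j\|_2^2
\]
follows from the spherical-harmonic expansion together with $\|\omega_1\|_2^2 = \tfrac{3}{4\pi}|M|^2$: the left-hand side is a conserved linear combination, while the right-hand side is a weighted $L^2$-norm of $\omega_{\ge 3}$ with coefficients in $[\tfrac14, \tfrac12)$, hence equivalent to $\|\omega_{\ge 3}\|_2^2$. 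Since $\bar\omega^{RH}$ has no $\ell \ge 3$ content, this immediately yields $\|(\eta_t)_{\ge 3}\|_2 \le \sqrt{2}\,\|\eta_0\|_2$.

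The main obstacle is the $\ell = 2$ mode, on which the functional above vanishes identically and the linearized dynamics around $\bar\omega^{RH}$ is degenerate (the coefficient $\tfrac{1}{j(j+1)} - \tfrac16$ vanishes for $j=2$). To handle it, I would identify an $\ell = 2$ function with a symmetric traceless $3\times 3$ matrix via $\omega_2(x) = x^T T x$; the zonal datum $\bar\omega_2$ corresponds to $\bar T = \gamma\bigl(\tfrac32 pp^T - \tfrac12 I\bigr)$ with eigenvalue pattern $(\gamma, -\gamma/2, -\gamma/2)$ and an $SO(2)$ stabilizer, so its $SO(3)$-orbit is only $2$-dimensional inside the $5$-dimensional $\ell = 2$ space. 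Higher Casimirs then produce approximately conserved polynomials in $T(t)$: expanding $\int \omega^3$ in spherical harmonics, using orthogonality, and controlling the cross-terms by the Step-2 bound $\|\omega_{\ge 3}\|_2 = O(\|\eta_0\|_2)$, shows that $C_1\,a_0^T T a_0 + C_2\,\mathrm{tr}(T^3)$ is conserved up to $O(\|\eta_0\|_2)$, where $a_0 := \tfrac{3}{4\pi}M_0$ is constant in time and is $O(\|\eta_0\|_2)$-close to $\Omega p$ (which is precisely where the hypothesis $\Omega \ne 0$ enters); similarly $\int\omega^4$ gives approximate conservation of a polynomial involving $a_0^T T^2 a_0$, and $\int\omega^2$ controls $\mathrm{tr}(T^2)$. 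Together with $|a_0|^2$ these five quantities generate the ring of polynomial $SO(3)$-invariants of the pair $(a_0, T)$, and so (for $|a_0|^2 \approx \Omega^2$ fixed) they pin down $T$ modulo the $SO(2)$ rotation around $a_0$; under this $SO(2)$, $\bar T$ is a fixed point.

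The hardest step is the quantitative reconstruction of $T(t)$ from approximate knowledge of its invariants near the point $\bar T$. Because $\bar T$ has a repeated eigenvalue, the map sending $T$ to its $SO(3)$-invariants is singular at $\bar T$ (its Jacobian drops rank), and its local inverse is only H\"older-$\tfrac12$: an $O(\|\eta_0\|_2)$ deviation in the invariants only forces an $O(\|\eta_0\|_2^{1/2})$ deviation of $T$ in Frobenius norm. This yields $\|(\eta_t)_2\|_2 \le C\|\eta_0\|_2^{1/2}$, and combining with the previous two steps gives $\|\eta_t\|_2 \le C\|\eta_0\|_2^{1/2}$, establishing \eqref{eq:1ststability}. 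The quantitative rank-drop at this singular invariant-theoretic point is the main technical obstacle and is exactly what produces the $\tfrac12$-H\"older exponent appearing in the statement.
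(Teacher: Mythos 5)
Your overall architecture coincides with the paper's: angular momentum freezes the first shell; the conserved combination of enstrophy, energy and angular momentum confines the $\ell\ge 3$ content (your identity for $Q-6E+\tfrac{3}{4\pi}|M|^2$ is exactly the content of \cref{lemma:Z_prime}); and the remaining $\ell=2$ problem is attacked by feeding higher Casimirs into a degenerate inverse-function argument, with the $\tfrac12$ exponent produced by a rank drop of the invariant map at the zonal point. Your packaging of the shell-2 step through the symmetric traceless matrix $T$ and the invariants of the pair $(a_0,T)$ is a legitimate variant of the paper's explicit coefficient computations (the paper itself uses the quadratic-form representation $\omega_2=x^TQx$ in \cref{lemma:rotation_case_Omega0}), and you correctly locate both the role of $\Omega\neq 0$ and the reason the zonal point is the degenerate one.

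The genuine gap is in the step you yourself call the hardest, where the key estimate is asserted rather than established: a rank-deficient Jacobian does \emph{not} by itself give a two-sided H\"older-$\tfrac12$ bound $|T-\bar T|^2\le C\,|\mathrm{Inv}(T)-\mathrm{Inv}(\bar T)|$. That conclusion requires a second-order non-degeneracy (fold-point) condition in the kernel direction, namely $\nabla^2F(x_0)(e,e)\notin\operatorname{col}\nabla F(x_0)$ as in \cref{lemma:ker_1dim}; if it fails, the invariants could vary cubically (or worse) along the degenerate direction and the exponent $\tfrac12$ would be lost. Verifying this condition is precisely where the paper's explicit Casimir computations do the work: in the zonal case the surviving Jacobian block is $\operatorname{diag}(\tfrac{8\Omega^2}{7}A^{RH},0)$, so one must check both that the rank is exactly one (this, not merely $a_0\approx\Omega p\ne 0$, is where $\Omega\neq 0$ enters quantitatively) and that the pure second derivative in the kernel direction escapes the column space. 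Your sketch gives no mechanism for this verification, and the general-position claim that five invariants ``pin down $T$ modulo $SO(2)$'' is exactly what breaks down at $\bar T$, so it cannot substitute for the computation. A secondary omission: the inverse-function estimate is local, so a continuity/bootstrap argument is needed to keep $(A(t),D(t))$ in the neighborhood where it applies for all $t\ge 0$; the paper carries this out explicitly.
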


The case $\Omega=0$ is degenerate, as the solution has no preferred axis of rotation. Hence adding any small rotation around a chosen axis produces the solution to slowly start rotating around this axis. To be specific for any $p\in\esse^2$, we can take
$$
\omega_0=Y_2(x)\qquad\mbox{and}\qquad\omega_0^\eps=\eps p\cdot x+Y_2(x),
$$
which satisfy $\|\omega_0^\eps-\omega_0\|_2\lesssim\eps$ and
\begin{equation}\label{eq:degeneracy_case}
\|\omega_t^\eps-\omega_t\|_2^2= \eps^2 + \|Y_2(R_p^{-\eps\frac{t}{3}
}x)-Y_2(x)\|_2^2,    
\end{equation}
which is of order 1 if $t\sim 1/\eps$ and we chose $p\in\esse^2$ such that $Y_2$ is not invariant under rotations. For $\Omega= 0$, qualitative stability of degree-2 waves, up to rotation through any axis, was shown by Cao--Wang--Zuo \cite{cao2023stabilitydegree2rossbyhaurwitzwaves}. In this case, we can also show a quantitative result. 
\begin{theorem}\label{thm:2}
Let $\bar{\omega}=Y_2$ be an eigenfunction in the second shell, which is a steady state of Euler equation, then there exists $C, \bar\eps>0$ such that if $\|\omega_0-\omega\|_2<\bar\eps$, then
\begin{equation}\label{eq:3rdstability}
    \inf_{p\in\esse^2,\,s\in\R }\|\omega_t(x)-\bar{\omega}(R^s_p x)\|_2\leq C  \|\omega_0-\bar{\omega}\|_2,
\end{equation}
where $\{\omega_t\}_{t\ge 0}$ is any solution to the Euler equation in $\esse^2$ \eqref{eq:euler_rotating}.
\end{theorem}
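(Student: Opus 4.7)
The plan is to decompose the vorticity into eigenspaces of the Laplacian, $\omega_t=\sum_{j\ge 0}\omega_t^{(j)}$ with $\omega_t^{(j)}\in V_j:=\ker(-\Delta-j(j+1))$, and exploit the rich family of conserved quantities of \eqref{eq:euler_rotating}---energy, $L^p$-Casimirs, and the vector-valued angular momentum $\int_{\esse^2}x\,\omega_t\,\dd S$---to control each shell separately.

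First, I would establish linear control of every shell other than $V_2$ by using the Arnold-type modified energy-Casimir
\[
H(\omega)=\tfrac{1}{12}\|\omega\|_2^2-E(\omega)=\sum_{j\ge 1}\Bigl(\tfrac{1}{12}-\tfrac{1}{2j(j+1)}\Bigr)\|\omega^{(j)}\|_2^2,
\]
whose spectral coefficient equals $-1/6$ at $j=1$, vanishes at $j=2$, and is bounded below by $1/24$ for $j\ge 3$. Since $H$ is conserved and the angular momentum (equivalently, the full $j=1$ projection $\omega_t^{(1)}$) is conserved as well, the combination $\sum_{j\ge 3}(\frac{1}{12}-\frac{1}{2j(j+1)})\|\omega_t^{(j)}\|_2^2$ is conserved. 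Evaluating at $t=0$ and using $\bar\omega^{(j)}=0$ for $j\neq 2$, this equals the same sum at $t=0$, bounded by $C\|\omega_0-\bar\omega\|_2^2$. Combined with the conservation of $\omega_t^{(1)}$, one gets $\|\omega_t^{(\neq 2)}\|_2\le C\|\omega_0-\bar\omega\|_2$.

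Second, I would show that the $V_2$ part $\omega_t^{(2)}$ stays within $L^2$-distance $C\|\omega_0-\bar\omega\|_2$ of the orbit $\mathcal{O}_{\bar\omega}=\{\bar\omega(R_p^s\cdot):p\in\esse^2,s\in\R\}$. Each element of $V_2$ is identified with a traceless symmetric $3\times 3$ matrix $A$ through $\omega^{(2)}(x)=x^\top Ax$, and $\mathcal{O}_{\bar\omega}$ is exactly the locus of matrices sharing the eigenvalue multiset of $\bar A$. There are only two independent $SO(3)$-invariants on such matrices, $I_1(A)=\operatorname{tr}(A^2)$ and $I_2(A)=\det(A)$, and these are controlled by the Casimirs $\int\omega_t^2$ and $\int\omega_t^3$: using the step-1 bound together with the transport-preserved $L^\infty$ bound on $\omega_t$, the cross terms in the expansions $\int\omega_t^k=\int(\omega_t^{(2)})^k+\text{cross}$ are $O(\|\omega_0-\bar\omega\|_2)$, yielding $|I_k(A_t)-I_k(A_0)|\lesssim\|\omega_0-\bar\omega\|_2$. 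Since $A_0$ is already $O(\|\omega_0-\bar\omega\|_2)$-close to $\bar A$, Hausdorff-continuity of orbits plus the implicit function theorem applied to the invariant map (wherever it is a submersion) give $\mathrm{dist}_{L^2}(\omega_t^{(2)},\mathcal{O}_{\bar\omega})\lesssim\|\omega_0-\bar\omega\|_2$. Choosing the optimizing rotation and combining with step 1 closes the estimate.

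The principal obstacle is the degenerate case in which $\bar\omega$ is zonal, so $\bar A$ has a repeated eigenvalue. There the invariant map is critical at $\bar A$, and conservation of $(I_1,I_2)$ alone yields only a H\"older-$1/2$ bound on the distance of $\omega_t^{(2)}$ to the orbit. I would circumvent this by exploiting an additional structural fact specific to the second shell: the Euler nonlinearity $u\cdot\nabla\omega$ vanishes identically when $\omega\in V_2$, so that $\partial_t\omega_t^{(2)}$ is generated entirely by cross-interactions with the $O(\|\omega_0-\bar\omega\|_2)$-small shells. A short computation shows that the two dominant cross contributions, $u^{(1)}\cdot\nabla\omega_t^{(2)}$ and $u_t^{(2)}\cdot\nabla\omega^{(1)}$, are both infinitesimal rotations of $\omega_t^{(2)}$ generated by the conserved $\omega_t^{(1)}$ and hence lie tangent to the orbit of $\omega_t^{(2)}$; this tangential drift is absorbed by the modulation parameter $R_p^s$. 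A bootstrap/continuity argument on the modulated remainder---writing $\omega_t=\bar\omega(R^{s(t)}_{p(t)}\cdot)+\tilde\omega_t$ with $(p(t),s(t))$ chosen so that $\tilde\omega_t^{(2)}$ is $L^2$-orthogonal to the orbit tangent---then upgrades the naive $O(\sqrt{\|\omega_0-\bar\omega\|_2})$ invariant-based bound to the desired linear one.
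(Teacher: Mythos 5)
Your Steps 1 and 2 reproduce the paper's architecture almost exactly: Step 1 is the content of \cref{lemma:Z_prime} (the conserved combination $\|\mathbb{P}_{\ge3}\omega\|_2^2-6\|\mathbb{P}_{\ge3}\omega\|_{H^{-1}}^2$, which is your $H$ after subtracting the frozen first shell), and Step 2 is essentially \cref{lemma:rotation_case_Omega0} together with \cref{sec:thm2}: the paper also identifies $\mathcal{H}^2$ with traceless symmetric matrices and in effect uses only the two spectral invariants. Two differences are worth flagging. First, a technical one: you invoke the polynomial Casimirs $\int\omega_t^k$ together with a ``transport-preserved $L^\infty$ bound on $\omega_t$'', but the hypothesis is only $L^2$-closeness of $\omega_0$ to $Y_2$, so $\|\omega_0\|_\infty$ is not controlled; the paper circumvents this with the truncated Casimirs $\tilde C_k$ of \eqref{eq:modif_Casimirs}, which are Lipschitz in $L^2$ and agree with $C_k$ on the bounded projected profile $\tilde\omega_t$. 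Second, and more substantively: the paper does not run an implicit function theorem on the pair of invariants. It first applies the normalizing rotation of \cref{lemma:rotation_case_Omega0}, which places the largest-modulus eigenvalue on the $x_3$-axis and forces $A^2\ge 3/7$ in the representation $AY_{2,0}+BY_{2,2}$, and then works with the single scalar $F(A)=\tfrac17\sqrt{5/\pi}\,A(4A^2-3)$ (this is $C_3$ after eliminating $B$ via $A^2+B^2=1$), whose derivative vanishes only at $A^2=1/4<3/7$. So the inversion step in the paper is never degenerate.

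The genuine gap in your proposal is the zonal (repeated-eigenvalue) case. Your diagnosis is correct that $(\operatorname{tr}(A^2),\det A)$ only yield a H\"older-$1/2$ bound on the spectral distance when $\bar A$ has a double eigenvalue. But the proposed repair does not close. The terms $u^{(1)}\cdot\nabla\omega^{(2)}$ and $u^{(2)}\cdot\nabla\omega^{(1)}$ are indeed both proportional to the infinitesimal rotation of $\omega^{(2)}$ about the conserved angular-momentum axis and can be absorbed by modulation; however, they are \emph{not} the only contributions of size $O(\|\omega_0-\bar\omega\|_2)$ to $\partial_t\omega^{(2)}$. The cross terms $\mathbb{P}_2\bigl[u^{(\ge3)}\cdot\nabla\omega^{(2)}+u^{(2)}\cdot\nabla\omega^{(\ge3)}\bigr]$ are of exactly the same order, are not tangent to the $SO(3)$-orbit, and can in principle drive a secular drift of the eigenvalue splitting up to the $O(\sqrt{\eps})$ ceiling permitted by the invariants on time scales $t\sim\eps^{-1/2}$; your bootstrap supplies no mechanism ruling this out, so the upgrade from $\eps^{1/2}$ to $\eps$ is unsubstantiated. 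This degeneracy is not an artifact of your formulation: it is exactly the fold-point situation the paper handles in \cref{thm:main} via \cref{lemma:ker_1dim}, where it settles for the exponent $1/2$; and you may check that even in the paper's proof of \cref{thm:2} the step recovering the full $L^2$ orbital distance from $|A(t)-A^{RH}|$ uses $|B(t)-B^{RH}|\lesssim|A(t)-A^{RH}|$, which requires $B^{RH}\ne0$, i.e.\ $\bar\omega$ non-zonal. You should either restrict to $\bar\omega$ with simple spectrum, accept the exponent $1/2$ in the zonal sub-case, or produce a genuinely new estimate on the non-tangential forcing from the high shells.
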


We note that a salient difference between \cref{thm:1} and \cref{thm:2} is the exponent in the stability, and is related to a degeneracy in our method of proof. In general, the exponent of stability of non-zonal degree-2 waves depends on which Fourier modes are non-trivial.
\begin{theorem}\label{thm:main}
    Let $\Omega\ne 0$ and
    \begin{equation*}
        \omega^{RH}_t(x)=\Omega x_3 +\sum_{m=-2}^2c_{2,m} Y_{2,m}\left(R_{e_3}^{-\Omega\frac{t}{3}}x\right)
    \end{equation*} be degree-2 Rossby--Haurwitz wave \eqref{eq:RH} rotating around the north pole, where $Y_{j,m}$ denote the spherical harmonics of degree $j$ and order $m$, see \cref{subsec:spherical_harmonics}. There exists $C, \bar\eps>0$ such that if $\|\omega_0-\omega_0^{RH}\|_2<\bar\eps$, then
    \begin{equation}\label{eq:2ndstability}
        \inf_{s\in[0,\infty)}\|\omega_t-\omega_s^{RH}\|_2\leq C  \|\omega_0-\omega_0^{RH}\|_2^{1/2},
    \end{equation}
    where $\{\omega_t\}_{t\ge 0}$ is any solution to the Euler equation in $\esse^2$ with initial condition $\omega_0$ \eqref{eq:euler_rotating}.

    Moreover, if
    \begin{equation*}
        |c_{2,-1}|^2+|c_{2,1}|^2>0,
    \end{equation*}
    then the stability estimate \eqref{eq:2ndstability} can be improved to
    \begin{equation}
        \inf_{s\in[0,\infty)}\|\omega_t-\omega_s^{RH}\|_2\leq C  \|\omega_0-\omega_0^{RH}\|_2,
    \end{equation}
\end{theorem}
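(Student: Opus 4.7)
Following the scheme behind \cref{thm:1,thm:2}, my plan is to combine three conservation laws for the Euler equation---enstrophy $\|\omega\|_2^2$, kinetic energy $E[\omega]=\tfrac12\|\nabla\Delta^{-1}\omega\|_2^2$, and the exact preservation of the first-shell projection $P_1\omega$ (angular momentum)---and to argue shell-by-shell on $\delta_t=\omega_t-\omega_t^{RH}$. The first step is to control everything outside the second shell via the non-negative, conserved functional
\[
Q[\omega]:=\|\omega\|_2^2-12\,E[\omega]+2\|P_1\omega\|_2^2=\sum_{j\geq 3}\Bigl(1-\tfrac{6}{j(j+1)}\Bigr)\|P_j\omega\|_2^2,
\]
where the cancellation uses that the weight $1-6/(j(j+1))$ vanishes at $j=2$ and is bounded below by $1/2$ for $j\geq 3$. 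Since $\omega_0^{RH}$ is supported on shells $\{1,2\}$, the first variation $\delta Q/\delta\omega=2\omega+12\Delta^{-1}\omega+4P_1\omega$ vanishes at $\omega_0^{RH}$, and a Taylor expansion gives $\sum_{j\geq 3}\|P_j\omega_t\|_2^2\leq 2Q[\omega_t]=2Q[\omega_0]\lesssim\|\delta_0\|_2^2$. Together with $P_0\omega_t\equiv 0$ and $P_1\omega_t=P_1\omega_0$, this reduces the problem to bounding $P_2(\omega_t-\omega_s^{RH})$ after optimising $s\in[0,\infty)$.

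\textbf{Second-shell analysis.} For the degree-two shell I would use the classical identification with symmetric traceless $3\times 3$ matrices through $Y_A(x)=x^\top Ax$, under which the $SO(3)$-action on $\esse^2$ becomes conjugation $A\mapsto RAR^\top$. Let $A_t$ and $A_s^{RH}$ be the matrices associated to $P_2\omega_t$ and $P_2\omega_s^{RH}$. Enstrophy conservation combined with Step~1 controls $\operatorname{tr}(A_t^2)$ up to $O(\|\delta_0\|_2)$, and the cubic Casimir $\int\omega^3$---whose leading degree-two contribution is a multiple of $\operatorname{tr}(A_t^3)$---controls the remaining spectral invariant to the same order. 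Since $(\operatorname{tr}A^2,\operatorname{tr}A^3)$ determines the spectrum of a traceless symmetric $3\times 3$ matrix, these Casimirs place $A_t$ within $O(\|\delta_0\|_2^{1/2})$ of the full $SO(3)$-orbit of $A_0^{RH}$. The reduction from the three-dimensional $SO(3)$-orbit to the one-dimensional $e_3$-rotation orbit is then effected using the exact conservation $P_1\omega_t=\Omega x_3+P_1\delta_0$: working in the rotating frame at angular speed $\Omega/3$, in which $\omega_0^{RH}$ is a genuine steady state, the coupling of the first and second shells in the Euler nonlinearity produces a quadratic Lyapunov functional whose Hessian on the orthogonal complement of the orbit tangent completes the control, yielding \eqref{eq:2ndstability}.

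\textbf{Main obstacle and the improved exponent.} The generic square-root loss is the quantitative signature of the quadratic character of the Casimir control on spectral invariants: these only give $\|A_t-RA_0^{RH}R^\top\|_F^2\lesssim\|\delta_0\|_2$ after minimising over $R\in SO(3)$. The hypothesis $|c_{2,-1}|^2+|c_{2,1}|^2>0$ is equivalent, in the matrix picture, to $e_3$ \emph{not} being an eigenvector of $A_0^{RH}$, since $Y_A$ carries no $Y_{2,\pm 1}$-component precisely when $Ae_3\parallel e_3$. Under this genericity, the Hessian of the Lyapunov functional restricted to the $Y_{2,\pm 1}$-block acquires a leading-order cross term with the first-shell background $\Omega x_3$ and becomes strictly positive definite, upgrading the quadratic bound to the Lipschitz one. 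The hardest step---and the main obstacle---is making this dichotomy quantitative: identifying the right Lyapunov functional in the rotating frame, computing its Hessian on the orthogonal complement of the modulation tangent, and verifying that the $Y_{2,\pm 1}$-block is strictly coercive exactly when $|c_{2,\pm 1}|^2>0$, while the other blocks are controlled by the Casimir estimates of Step~2 without loss.
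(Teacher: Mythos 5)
Your Step 1 (freezing the first shell by angular momentum and confining the energy outside shells $\{1,2\}$ via a conserved quadratic functional plus a Poincar\'e-type lower bound) is essentially the paper's \cref{lemma:Z_prime} and \cref{lem:projection1}, and is fine. The genuine gap is in your second-shell analysis. You claim that enstrophy and $C_3=\int\omega^3$ control only the spectral invariants $(\operatorname{tr}A^2,\operatorname{tr}A^3)$ of the traceless symmetric matrix $A_t$, hence only locate $A_t$ near the full $SO(3)$-orbit, and you then defer the reduction to the one-dimensional $e_3$-orbit to an unspecified ``quadratic Lyapunov functional in the rotating frame'' whose construction and coercivity you explicitly leave open. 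That deferred step is not a minor verification: it is the entire content of the theorem, and no candidate functional is exhibited, no Hessian is computed, and no mechanism is given for why its kernel should coincide exactly with the tangent to the $e_3$-orbit. As written, the argument proves nothing beyond proximity to the $SO(3)$-orbit, which (because of the fixed background $\Omega x_3$) is strictly weaker than \eqref{eq:2ndstability}.

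Moreover, the premise that the Casimirs only see conjugation invariants is wrong precisely when $\Omega\neq0$, and this is what the paper exploits instead of a Lyapunov functional. Since $x_3^2$ has a nontrivial $Y_{2,0}$ component, the polynomial Casimirs $C_k(\Omega x_3+\mathbb{P}_2\omega)$ contain cross terms such as $\Omega^2\int x_3^2\,\mathbb{P}_2\omega$ and $\Omega\int x_3(\mathbb{P}_2\omega)^2$ that single out the $e_3$-axis and are \emph{not} functions of $(\operatorname{tr}A^2,\operatorname{tr}A^3)$ alone. After modding out only the $e_3$-rotations and imposing the enstrophy normalization, three degrees of freedom $(A,B,D)$ remain, and the paper shows the map $(C_3,C_4,C_5)$ in these variables has invertible Jacobian when $B\neq0$ (equivalently $|c_{2,-1}|^2+|c_{2,1}|^2>0$), giving the Lipschitz rate via \cref{lemma:grad_neq0}, while for $B=0$ the Jacobian is rank-deficient but satisfies the fold-point condition of \cref{lemma:ker_1dim}, giving the exponent $1/2$; degenerate values of $\Omega$ are handled by swapping in $C_6,C_7,\dots$. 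Your diagnosis of the dichotomy (the $Y_{2,\pm1}$ content of the wave, i.e.\ whether $Ae_3\parallel e_3$) is qualitatively correct, but the mechanism you propose for it is unverified and, in the degenerate case, you would still need a second-order (fold-type) argument to recover the $1/2$ exponent --- a step your outline does not contain.
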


In broad terms, our method of proof follows from the intuition put forward by Wirosoetino and Shepherd in \cite{Wirosoetisno1999} using the behavior of Casimir invariants
$$
C_k=\int_{\esse^2} \omega^k\;\mathrm{d}\mathcal{S}.
$$
This strategy was used by Elgindi in \cite{elgindi2023remarkstabilityenergymaximizers}, to study the related problem of stability of steady states for the Euler equation in $\mathbb{T}^2$, see Wang and Zuo \cite{Wang2023} for qualitative stability of the same problem. The work of Elgindi \cite{elgindi2023remarkstabilityenergymaximizers}
gives the Wirosoetino and Shepherd framework a more concise mathematical footing through the use of the inverse function theorem. The main idea is to write the Casimirs explicitly in terms of the active Fourier modes of the solution. In our case $\{c_{2,m}\}_{m=-2}^2$, as the energy does not cascade out of the first and second shell, see \cite{Wirosoetisno1999,Costantin2022} and \cref{lemma:Z_prime} for a precise statement. Hence, the stability question can be reduced to understanding the evolution of the 6 Fourier coefficients that describe all functions belonging to a combination of the first and second shell, see \eqref{eq:2ndshell}.
We can further reduce the degrees of freedom by considering invariance under rotations, see \cref{subsec:reduced_dof}. Stability follows by finding a suitable combination of Casimirs, as many as degrees of freedom the solution, that form a local change of variables. The caveat is that any combination of Casimirs might be degenerate in the sense of deficient rank of the Jacobian of the map defined by the Casimirs. This accounts for the change in the exponent of stability. In fact, the degeneracy in the exponent of stability is already present in the work of Elgindi \cite{elgindi2023remarkstabilityenergymaximizers}.

In terms of extensions of our methods, we note that Rossby--Haurwitz waves can also be constructed in the case when the domain is an ellipsoid, see Xu \cite{xu2023nonzonalrossbyhaurwitzsolutions2d}. In particular, the case of an oblate spheroid provides a natural framework to account for the equatorial bulge observed in the gas giants: Jupiter deviates from spherical symmetry by about 6.5\%, while Saturn shows an even larger deviation of nearly 10\%. Understanding how this affects stability of the Rossby--Haurwitz wave is an open problem, and might provide and explanation to why some zonal flows are observed in these planets. We also mention the case of the non-rectangular torus as a possible application of the tools exposed in this paper, see Wang \cite{wang2025orbitalstabilitylaplacianeigenstates}.

The paper is organized as follows. \cref{sec:preliminaries} introduces the spherical harmonics, recalls some useful properties of the Euler equation on $\esse^2$, and proves two useful classical inverse function Lemmas.
\cref{sec:main} contains the proof of our main results \cref{thm:2} and \cref{thm:main}.

\section{Notations and preliminaries} \label{sec:preliminaries}
In this section, we set up the notation and collect preliminary results for \cref{sec:main}.

\cref{subsec:spherical_harmonics} recalls the definition and properties of spherical harmonics.
\cref{subsec:known_results_euler} collects key properties of Euler flows on $\mathbb{S}^2$, including the main conserved quantities and their consequences. \cref{subsec:calculus_lemmas} proves two classical inverse function lemmas that generalize the 1-d calculus results of Elgindi \cite[Section 2.2]{elgindi2023remarkstabilityenergymaximizers} to higher dimensions.

\subsection{Spherical harmonics} \label{subsec:spherical_harmonics}
We recall that $x\in\mathbb{S}^2$ can be parametrized with $x=(\sin\theta\cos\phi,\sin\theta\sin\phi,\cos\theta)$, using the spherical coordinates $(\theta,\phi)\in[0,\pi]\times[0,2\pi)$. The classical orthonormal basis of $L^2(\mathbb{S}^2)$ is given by the spherical harmonics $\{Y_{l,m}\}\subset\mathcal{C}^\infty(\mathbb{S}^2)$, which are the eigenfunctions of the Laplacian
\begin{equation*}
    \Delta Y_{l,m}=-l(l+1)Y_{l,m}\quad\forall l\in\mathbb{N},m\in\{-l,...,l\}.
\end{equation*}
The spherical harmonics are defined by
\[
Y_{l,m}(\theta, \phi) =
\begin{cases}
(-1)^m \sqrt{2} \sqrt{\frac{2l + 1}{4\pi} \frac{(l - |m|)!}{(l + |m|)!}} P_{l,|m|}(\cos \theta) \sin(|m| \phi), & \text{if } m < 0, \\
\sqrt{\frac{2l + 1}{4\pi}} P_l(\cos \theta), & \text{if } m = 0, \\
(-1)^m \sqrt{2} \sqrt{\frac{2l + 1}{4\pi} \frac{(l - m)!}{(l + m)!}} P_{l,m}(\cos \theta) \cos(m \phi), & \text{if } m > 0,
\end{cases}
\]
where \(\{P_l\}_{l\in\enne} \) are the Legendre polynomials, and \(\{P_{l,m}\}_{l,m>0}\) are the associated Legendre functions.
Explicitly, they are given by
\[
P_l(s) = \frac{1}{2^l l!} \diff{^l}{s^l}\left( s^2 - 1 \right)^l, \quad P_{l,m}(s) = (-1)^m (1 - s^2)^{m/2} \diff{^m}{s^m}P_l(s),\qquad s \in (-1, 1).
\]
Given $n\in\enne$, and denoting the $l$-th energy shell by $\mathcal{H}^l:=\operatorname{Span}\{Y_{l,m}\}_{m=-l}^l$, the projector $\mathbb{P}_n:L^2(\esse^2)\to\mathcal{H}^n$ is defined by
\begin{equation*}
    \mathbb{P}_n\left(\sum_{l\in\enne}\sum_{m=-l}^lc_{l,m}Y_{l,m}\right)=\sum_{m=-n}^nc_{n,m}Y_{n,m}\quad\forall\{c_{l,m}\}\subset\erre.
\end{equation*}
Furthermore, we define
\begin{equation*}
    \mathbb{P}_{\neq n}:=I-\mathbb{P}_n=\sum_{l\neq n}\mathbb{P}_l,\quad\mathbb{P}_{\leq n}:=\sum_{l\leq n}\mathbb{P}_l,\quad\mathbb{P}_{\geq n}:=\sum_{l\geq n}\mathbb{P}_l.
\end{equation*}

\subsection{Known results for Euler} \label{subsec:known_results_euler}
The dynamics generated by the Euler equation on $\esse^2$ \eqref{eq:euler_rotating} satisfy the following conservation laws.
First, the equation is Hamiltonian, with the Hamiltonian given by the kinetic energy
\begin{equation}
    H(\omega)=\frac{1}{2}\int_{\esse^2}|\nabla^\perp \psi|^2\;\mathrm{d}\mathcal{S}=-\frac{1}{2}\int_{\esse^2}\omega\Delta^{-1}\omega\;\mathrm{d}\mathcal{S}=\frac{1}{2}\|\omega\|^2_{H^{-1}},
    \label{eq:energy}
\end{equation}
which is also a conserved quantity. Moreover, the Euler equation on $\esse^2$ \eqref{eq:euler_rotating} preserves the level sets of the vorticity, which implies the conservation of infinitely many quantities, known as Casimirs
\begin{equation}
    C_f(\omega)=\int_{\esse^2}f(\omega)\;\mathrm{d}\mathcal{S}\quad\forall f\in\mathcal{C}^0(\erre).
    \label{eq:Casimirs}
\end{equation}
In particular, every $L^p$ norm of the vorticity is a conserved quantity, as is the $L^2$ norm which is often referred to as \textit{enstrophy}. Finally, due to the spherical symmetry, the angular momentum
\begin{equation}
    L(\omega)=\int_{\esse^2}\omega\vec{\boldsymbol{n}}\;\mathrm{d}\mathcal{S},
    \label{eq:cons_ang_mom}
\end{equation}
where $\vec{\boldsymbol{n}}=(x_1,x_2,x_3)$, is a conserved quantity. To show the conservation of angular momentum, we test the vorticity equation in \eqref{eq:euler_rotating} against the coordinate function $x_\alpha$ and integrate by parts. Using that $\partial_t \omega = - \nabla \omega \cdot \nabla^\perp \psi$ and that $\nabla \cdot \nabla^\perp \psi = 0$, we obtain
\begin{equation}
    \frac{d}{dt}L_\alpha(\omega) 
    = -\int_{\mathbb{S}^2} \nabla \omega \cdot \nabla^\perp \psi \, x_\alpha \; \mathrm{d}\mathcal{S} 
    = -\int_{\mathbb{S}^2} \nabla^\perp x_\alpha \cdot \nabla \psi \,\Delta \psi \; \mathrm{d}\mathcal{S},
    \qquad \alpha \in \{1,2,3\}.
    \label{eq:step_pf_cons_L_clean}
\end{equation}
The first-order operator
\[
T_\alpha : \psi \mapsto \nabla^\perp x_\alpha \cdot \nabla \psi
\]
is skew-adjoint and commutes with the Laplacian. Combining these, and recalling that $\Delta$ is self-adjoint, the right-hand side of \eqref{eq:step_pf_cons_L_clean} vanishes:
\[
\frac{d}{dt}L_\alpha(\omega)=-\int_{\mathbb{S}^2} \nabla^\perp x_\alpha \cdot \nabla \psi \, \Delta \psi \; \mathrm{d}\mathcal{S}
= -\int_{\mathbb{S}^2} (T_\alpha \psi)(\Delta \psi) \; \mathrm{d}\mathcal{S}
= 0, \qquad \alpha \in \{1,2,3\},
\]
which shows that the angular momentum \eqref{eq:cons_ang_mom} is conserved. 

In particular, the conservation of the angular momentum implies that the first energy shell $\mathcal{H}^1$ is frozen during the evolution
$$
\int_{\esse^2}\omega_0 Y_{1,m}\;\mathrm{d}\mathcal{S}=\int_{\esse^2}\omega_t Y_{1,m}\;\mathrm{d}\mathcal{S},\quad\forall t\geq0,\ m=-1,0,1,
$$
as, up to a constant and re-labelling, $\{Y_{1,m}\}_{m=-1}^1$ equal to $\{x_i\}_{i=1}^3$. Since the conservation of angular momentum freezes the modes in the first shell, it is natural to wonder what happens in the second shell and beyond. To this regard, we state and prove the following lemma based on the arguments of \cite[Section 3]{Wirosoetisno1999}, see also \cite[Theorem 7 (iii)]{constantin2025onsetinstabilityzonalstratospheric}.
\begin{lemma} \label{lemma:Z_prime}
    Let $\omega_t$ be a solution to Euler equation in $\esse^2$ \eqref{eq:euler_rotating} with initial datum $\omega_0\in L^2(\esse^2)$, then for all $t\geq0$ it holds
    \begin{equation}
        \frac{1}{2}\|\mathbb{P}_{\geq3}\omega_0\|_2^2\leq \|\mathbb{P}_{\geq3}\omega_t\|_2^2\leq2\|\mathbb{P}_{\geq3}\omega_0\|_2^2.
        \label{eq:Zprime_eq_ofthelemma}
    \end{equation}
\end{lemma}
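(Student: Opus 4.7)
The plan is to exploit the three scalar conservation laws already recorded in \cref{subsec:known_results_euler}: the enstrophy $E(\omega):=\|\omega\|_2^2$ (Casimir with $f(s)=s^2$), twice the energy $2H(\omega)=\|\omega\|_{H^{-1}}^2$, and the first-shell mass $\|\mathbb{P}_1\omega\|_2^2$ (which is conserved because, up to a constant, the basis of $\mathcal{H}^1$ is $\{x_1,x_2,x_3\}$, and $L(\omega)$ is conserved). Writing $\omega=\sum_{l\ge 1} \mathbb{P}_l\omega$ (note $\mathbb{P}_0\omega=0$ since $\omega=\Delta\psi$ has zero mean) and using that the $Y_{l,m}$ diagonalize $-\Delta$, these conserved quantities read
\begin{align*}
E(\omega)&=\sum_{l\ge 1}\|\mathbb{P}_l\omega\|_2^2, & 2H(\omega)&=\sum_{l\ge 1}\frac{\|\mathbb{P}_l\omega\|_2^2}{l(l+1)}, & \|\mathbb{P}_1\omega\|_2^2&=\|\mathbb{P}_1\omega\|_2^2.
\end{align*}

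The key observation is that the linear combination $E(\omega)-12\,H(\omega)$ weighs $\|\mathbb{P}_l\omega\|_2^2$ by $1-6/(l(l+1))$, and this coefficient vanishes precisely at $l=2$. After cancelling the $l=1$ contribution against the conserved angular momentum term, I would therefore introduce
\begin{equation*}
Z(\omega):=E(\omega)-12\,H(\omega)+2\|\mathbb{P}_1\omega\|_2^2=\sum_{l\ge 3}\left(1-\frac{6}{l(l+1)}\right)\|\mathbb{P}_l\omega\|_2^2,
\end{equation*}
which is a conserved quantity supported on modes $l\ge 3$.

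To conclude, I would simply note that the function $l\mapsto 1-6/(l(l+1))$ is increasing on $\{3,4,\dots\}$, takes the value $1/2$ at $l=3$, and tends to $1$. Hence
\begin{equation*}
\tfrac{1}{2}\|\mathbb{P}_{\ge 3}\omega\|_2^2\ \le\ Z(\omega)\ \le\ \|\mathbb{P}_{\ge 3}\omega\|_2^2
\end{equation*}
for every admissible $\omega$. Applying this at time $t$ for the upper and lower bounds, and at time $0$ for the other, and using $Z(\omega_t)=Z(\omega_0)$, yields both inequalities in \eqref{eq:Zprime_eq_ofthelemma}.

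There is no real obstacle here: the argument is purely algebraic once the correct linear combination of the three conserved scalars is identified. The only thing to watch is ensuring that all the manipulations are justified in $L^2$, which follows from Parseval's identity and the fact that $H^{-1}\supset L^2$ on $\esse^2$, so that $H(\omega)$ is finite whenever $\omega_0\in L^2$.
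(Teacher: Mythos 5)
Your proof is correct and follows essentially the same route as the paper: your conserved quantity $Z(\omega)=E(\omega)-12H(\omega)+2\|\mathbb{P}_1\omega\|_2^2$ is exactly the paper's $\|\mathbb{P}_{\geq3}\omega\|_2^2-6\|\mathbb{P}_{\geq3}\omega\|_{H^{-1}}^2$, obtained there by subtracting the enstrophy and energy balances after cancelling the frozen first shell and the $l=2$ shell via $\|\mathbb{P}_2\omega\|_{H^{-1}}^2=\tfrac{1}{6}\|\mathbb{P}_2\omega\|_2^2$. The final two-sided bound via the monotonicity of $l\mapsto 1-6/(l(l+1))$ on $l\ge 3$ is the same Poincar\'e-type estimate the paper uses, so the argument is sound.
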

\begin{proof}
    Using the conservation of the $L^2$ norm and of the angular momentum, we have that 
    \begin{equation}\label{eq:Zprime_proof_step-1}
        \|\mathbb{P}_2\omega_t\|_2^2+\|\mathbb{P}_{\geq3}\omega_t\|_2^2=\|\mathbb{P}_2\omega_0\|_2^2+\|\mathbb{P}_{\geq3}\omega_0\|_2^2
    \end{equation}
    is a conserved quantity.
    Similarly, using the conservation of the energy and of the angular momentum, we have that
    \begin{equation}\label{eq:Zprime_proof_step0}
        \|\mathbb{P}_2\omega_t\|^2_{H^{-1}}+\|\mathbb{P}_{\geq3}\omega_t\|^2_{H^{-1}}=\|\mathbb{P}_2\omega_0\|^2_{H^{-1}}+\|\mathbb{P}_{\geq3}\omega_0\|^2_{H^{-1}}
    \end{equation}
    is also a conserved quantity.
    Using the identity
    \begin{equation*}
        \|\mathbb{P}_2\omega\|^2_{H^{-1}}=\frac{1}{6}\|\mathbb{P}_2\omega\|_2^2,
    \end{equation*}
    that holds for any function $\omega\in L^2(\esse^2)$, we can re-write \eqref{eq:Zprime_proof_step0} as
    \begin{equation}\label{eq:Zprime_proof_step1}
        \|\mathbb{P}_2\omega_t\|_2^2+6\|\mathbb{P}_{\geq3}\omega_t\|^2_{H^{-1}}=\|\mathbb{P}_2\omega_0\|_2^2+6\|\mathbb{P}_{\geq3}\omega_0\|^2_{H^{-1}}.
    \end{equation}
    Combining \eqref{eq:Zprime_proof_step-1} and \eqref{eq:Zprime_proof_step1}, we obtain
    \begin{equation}\label{eq:Zprime_proof_step2}
        \|\mathbb{P}_{\geq3}\omega_t\|_2^2-6\|\mathbb{P}_{\geq3}\omega_t\|^2_{H^{-1}}=\|\mathbb{P}_{\geq3}\omega_0\|_2^2-6\|\mathbb{P}_{\geq3}\omega_0\|^2_{H^{-1}}.
    \end{equation}
    For the $l\geq3$ modes we have the Poincaré inequality for any function $\omega\in L^2(\esse^2)$:
    \begin{equation}\label{eq:poincare_inequality}
        \frac{1}{2}\|\mathbb{P}_{\geq3}\omega\|_2^2 \le \|\mathbb{P}_{\geq3}\omega\|_2^2-6\|\mathbb{P}_{\geq3}\omega\|^2_{H^{-1}}.
    \end{equation}
    Applying \eqref{eq:poincare_inequality} on $\omega_t$ and $\omega_0$ within the identity \eqref{eq:Zprime_proof_step2}, we obtain the desired result \eqref{eq:Zprime_eq_ofthelemma}.
\end{proof}
As a consequence of Lemma \ref{lemma:Z_prime}, given an initial condition $\omega_0\in L^2(\esse^2)$ satisfying $\|\mathbb{P}_{\geq3}\omega_0\|_2\leq\varepsilon$ for some $\varepsilon>0$, it immediately follows that
\begin{equation}
    \|\mathbb{P}_{\geq3}\omega_t\|_2\leq\sqrt{2}\varepsilon\quad\forall t\geq0.
    \label{eq:stab_lgeq3}
\end{equation}
Along with the conservation of angular momentum \eqref{eq:cons_ang_mom}, the estimate \eqref{eq:stab_lgeq3} shows that energy cannot transfer between the second shell and higher shells. However, the coefficients of the five second-shell modes may still evolve in time. For non-zonal steady states in the second shell, small perturbations can lead to $L^2$ instability; see \cite{constantin2025onsetinstabilityzonalstratospheric}. Therefore, in this manuscript we focus on proving orbital stability.

\subsection{\texorpdfstring{Inverse function lemmas}{Inverse function lemmas}} \label{subsec:calculus_lemmas}
Next, we recall classical stability estimates associated to inverse functions of smooth maps $F:\R^n\to\R^n$, which will be applied in our main results. We start by considering the case when the gradient of $\nabla F(x_0)$ is invertible, and show that the inverse function is Lipschitz continuous. 
\begin{lemma} \label{lemma:grad_neq0}
        Let $F:\erre^n \to\erre^n$ be a smooth function and $x_0\in \R^n$. Assume that $\det\nabla F(x_0)\neq0$, then there exists $\delta>0$ and $C=C(F,x_0)>0$ such that for all $x\in B_\delta(x_0)$, we have the stability estimate
        \begin{equation}\label{eq:grad_neq0}
            |x-x_0|\leq C |F(x)-F(x_0)|.
        \end{equation}
\end{lemma}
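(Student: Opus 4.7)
The statement is the standard quantitative form of the inverse function theorem at a point where the Jacobian is invertible. Rather than invoking the inverse function theorem and then estimating $\nabla F^{-1}$ on a neighborhood, I would argue directly via a first-order Taylor expansion, which keeps the constants $\delta$ and $C$ transparent and can be reused almost verbatim for the companion (degenerate) lemma.

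The plan is to write, for $x$ close to $x_0$,
\begin{equation*}
F(x)-F(x_0) \;=\; \nabla F(x_0)(x-x_0) + R(x), \qquad |R(x)|\le M\,|x-x_0|^2,
\end{equation*}
where $M$ is any upper bound on the componentwise Hessians of $F$ on, say, $B_1(x_0)$, which is available by smoothness. Setting $A:=\nabla F(x_0)$, the hypothesis $\det A\ne 0$ furnishes a coercivity constant $\sigma := 1/\|A^{-1}\|>0$, so that $|Av|\ge \sigma|v|$ for every $v\in\erre^n$. The reverse triangle inequality then yields
\begin{equation*}
|F(x)-F(x_0)| \;\ge\; |A(x-x_0)|-|R(x)| \;\ge\; \bigl(\sigma-M|x-x_0|\bigr)\,|x-x_0|.
\end{equation*}
Choosing $\delta:=\min\{1,\sigma/(2M)\}$ so that the parenthesis is at least $\sigma/2$ on $B_\delta(x_0)$, the claim follows with $C=2/\sigma$.

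There is no real obstacle here: the invertibility assumption is exactly what is needed to extract a linear lower bound strong enough to absorb the quadratic remainder on a sufficiently small ball. The reason I prefer this route over directly invoking the inverse function theorem is that it makes $C$ depend only on $\|\nabla F(x_0)^{-1}\|$ and local second derivatives of $F$, and it sets up the same Taylor-expansion template that the companion degenerate-case lemma (which produces the square-root loss seen in \cref{thm:1} and \cref{thm:main}) will follow.
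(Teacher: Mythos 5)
Your argument is correct and is essentially identical to the paper's proof: both expand $F$ to first order with a quadratic remainder, use $|\nabla F(x_0)v|\ge |v|/\|\nabla F(x_0)^{-1}\|$ as the coercivity input, absorb the remainder on a ball of radius comparable to $\sigma/M$, and arrive at the same constant $C=2\|\nabla F(x_0)^{-1}\|$. No differences worth noting.
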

When $\nabla F(x_0)$ is not invertible, several different cases may occur depending on the dimension of $\ker\nabla F(x_0)$ and its interaction with the second derivatives $\nabla^2F(x_0)$. In the following Lemma, we consider the specific case when $\dim\ker\nabla F(x_0)=1$ and $\nabla^2F(x_0)(e,e)\notin \operatorname{col}\nabla F(x_0)$, where $e\in\ker\nabla F(x_0)$ is a unit vector. The classical literature refers to such $x_0$ as a fold point. Under these assumptions, there exists an invertible smooth change of variables $x\in B_\delta(x_0)\mapsto y\in B_\delta(0)$ such that in this new variable
    $$
        F(x(y))-F(x_0)=(y_1,y_2,\ldots,y^2_n),
    $$
see for instance \cite[Chapter III, Theorem 4.5]{golubitsky1973stable}. In this setting, we expect a different stability estimate than the one in \cref{lemma:grad_neq0}.
In fact, the inverse does not even exist, however we have that any of its two branches are $\mathcal{C}^{1/2}$, like the positive and negative square root near zero.
\begin{lemma} \label{lemma:ker_1dim}
    Let $F:\erre^n\to\erre^n$ be a smooth function and $x_0\in \R^n$. Assume that
    \begin{equation*}
        \dim\ker\nabla F(x_0)=1,
    \end{equation*}
    and that
    \begin{equation}
        \nabla^2F(x_0)(e,e)\notin \operatorname{col}\nabla F(x_0),
        \label{eq:1dimker_crucial_assumption}
    \end{equation}
    where $e\in\ker\nabla F(x_0)$ is a unit vector. Then, there exists $\delta>0$ and a constant $C=C(F,x_0)>0$, such that for all $x\in B_\delta(x_0)$ we have the stability estimate
    \begin{equation}\label{eq:ker_1dim}
        |x-x_0|^2\leq C|F(x)-F(x_0)|.
    \end{equation}
\end{lemma}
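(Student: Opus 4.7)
My plan is to reduce to the fold normal form that is cited just before the statement. The hypotheses $\dim\ker\nabla F(x_0)=1$ and \eqref{eq:1dimker_crucial_assumption} say exactly that $x_0$ is a fold singularity, so by \cite[Chapter III, Theorem 4.5]{golubitsky1973stable} there exist $\delta>0$, a smooth diffeomorphism $\Phi$ from a neighbourhood of $0\in\R^n$ onto a neighbourhood of $x_0$ with $\Phi(0)=x_0$, and a smooth diffeomorphism $\Psi$ of a neighbourhood of $0$ in the target with $\Psi(0)=0$, such that
\[
\Psi\bigl(F(\Phi(y))-F(x_0)\bigr)=(y_1,y_2,\ldots,y_{n-1},y_n^2)\qquad\text{for all }y\in B_\delta(0).
\]
Both $\Phi,\Phi^{-1}$ and $\Psi,\Psi^{-1}$ are smooth with non-vanishing Jacobians at the origin, so after possibly shrinking $\delta$ I get bilipschitz bounds $c|y|\leq|x-x_0|\leq C|y|$ and $c'|z|\leq|\Psi(z)|\leq C'|z|$ with constants depending only on $F$ and $x_0$.

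With these two bilipschitz reductions in hand, the inequality \eqref{eq:ker_1dim} is equivalent to the model estimate $|y|^2\lesssim |(y_1,\ldots,y_{n-1},y_n^2)|$ on $B_\delta(0)$. This last estimate is elementary: taking $\delta\leq 1$, for each $i<n$ I have $y_i^2\leq|y_i|\leq|(y_1,\ldots,y_{n-1},y_n^2)|$, and for the last coordinate $y_n^2=\sqrt{y_n^4}\leq|(y_1,\ldots,y_{n-1},y_n^2)|$, so summing gives $|y|^2\leq n\,|(y_1,\ldots,y_{n-1},y_n^2)|$. Tracing the two bilipschitz equivalences back produces the claimed constant $C=C(F,x_0)$.

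\textbf{Main obstacle.} The only non-trivial input is the fold normal form, which is black-boxed from singularity theory. If a self-contained argument is preferred, one can avoid the citation by writing $x-x_0=se+v$ with $v\in e^\perp$ and splitting the target as $\R^n=\operatorname{col}\nabla F(x_0)\oplus\R w$ with $w=\nabla^2F(x_0)(e,e)$, which is a direct-sum decomposition precisely by hypothesis \eqref{eq:1dimker_crucial_assumption}. The implicit function theorem, applied to the $\operatorname{col}\nabla F(x_0)$-component of $F(x)-F(x_0)$, gives $v$ as a smooth function of $s$ and of the first $n-1$ target coordinates, of size $O(s^2+|\text{first }n{-}1\text{ target coords}|)$. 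Substituting back into the remaining target coordinate yields $\tfrac{1}{2}s^2+O(s^3)$ as leading behaviour, and inverting this quadratic relation produces the $1/2$-Hölder control along $e$ and Lipschitz control in the other directions, matching the model inequality above. Either route leads to the same final estimate \eqref{eq:ker_1dim}; I would favour the normal-form route for brevity since the paper already cites it.
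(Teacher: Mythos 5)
Your argument is correct, but it takes a genuinely different route from the paper's. The paper proves the lemma by hand: it splits $x-x_0=te+sw$ with $w\in\ker\nabla F(x_0)^\perp$, and runs a two-case Taylor expansion (the regime $t^2/\gamma\le s$, where the surviving linear term $\nabla F(x_0)w\,s$ dominates and gives Lipschitz control of $s$; and the regime $s<t^2/\gamma$, where the projection of $\tfrac12\nabla^2F(x_0)(e,e)t^2$ onto $\operatorname{col}\nabla F(x_0)^\perp$ dominates and gives the square-root control of $t$). You instead black-box the Whitney fold normal form from Golubitsky--Guillemin, reduce to the model map $(y_1,\dots,y_{n-1},y_n^2)$, and finish with the elementary bound $|y|^2\le n\,|(y_1,\dots,y_{n-1},y_n^2)|$ transported back through the two bilipschitz changes of coordinates. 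Both are valid: your hypotheses are exactly the fold condition (the non-degeneracy \eqref{eq:1dimker_crucial_assumption} is equivalent to $\partial_e\det\nabla F(x_0)\neq0$, which is what the normal-form theorem needs), and your reduction chain $|x-x_0|^2\lesssim|y|^2\lesssim|\Psi(F(x)-F(x_0))|\lesssim|F(x)-F(x_0)|$ is airtight. You are also more careful than the paper's own informal statement of the normal form, which omits the target diffeomorphism $\Psi$ that the theorem actually requires. What the paper's direct argument buys is a self-contained proof with constants expressed explicitly through $\lambda$, $\eta$, and suprema of $\|\nabla^2F\|$, $\|\nabla^3F\|$ --- in the spirit of a quantitative stability paper --- whereas your route is shorter and conceptually cleaner but hides the constants inside the normal-form diffeomorphisms. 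Your sketched implicit-function-theorem alternative is essentially a third correct route, though as written it is only an outline.
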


\subsubsection{Proof of \cref{lemma:grad_neq0}, \cref{lemma:ker_1dim}}
To begin with, let us introduce the notation and recall some useful calculus properties. For $k\in\enne\setminus\{0\}$, and for any $\xi\in\erre^n$ we define the multi-linear operator $\nabla^kF(\xi):(\erre^{n})^k\to\erre^n$ by
\begin{equation*}
    [\nabla^kF(\xi)(v_1,\dots,v_k)]_i=\sum_{i_1,\dots,i_k=1}^n(\partial_{i_1}\cdot\cdot\cdot\partial_{i_k}F_i(\xi))\cdot[v_1]_{i_1}\cdot\cdot\cdot[v_k]_{i_k}.
\end{equation*}
This tensor notation allows to write the Taylor expansion of $F$ about $x_0$ truncated at order $N\in\enne\setminus\{0\}$ in the following form. Given $\delta>0$ and $x\in B_\delta(x_0)$, there exists $\xi\in B_\delta(x_0)$ such that
\begin{equation}
    F(x)-F(x_0)=\sum_{k=1}^{N-1}\frac{1}{k!}\nabla^kF(x_0)(x-x_0,\dots,x-x_0)+\frac{1}{N!}\nabla^NF(\xi)(x-x_0,\dots,x-x_0). \label{eq:taylor_expansion}
\end{equation}
For $k\in\enne$, we define the operator norm of $\nabla^k F(\xi)$ as 
\begin{equation*}
    \|\nabla^k F(\xi)\|:=\sup\left\{\nabla^kF(\xi)(v_1,\dots,v_k)\,\middle\vert\, v_j\in\erre^n,\ \|v_j\|=1,\ j=1,\dots,k\right\}.
\end{equation*}

\begin{proof}[Proof of \cref{lemma:grad_neq0}]
    We consider $\delta\in (0,1)$, and apply Taylor's theorem to the function $F$ at the point $x_0$ to obtain that for any $x\in B_\delta(x_0)$ there exists $\xi\in B_\delta(x_0)$ such that
    \begin{equation*}
        \nabla F(x_0)(x-x_0)+\frac{1}{2}\nabla^2F(\xi)(x-x_0,x-x_0)=F(x)-F(x_0).
    \end{equation*}
    Taking norms and using the triangle inequality, we have 
    \begin{equation}\label{eq:grad_neq0_proof_step0}
        \left|\nabla F(x_0)(x-x_0)\right|-\frac{1}{2}\left|\nabla^2F(\xi)(x-x_0,x-x_0)\right|\le |F(x)-F(x_0)|.
    \end{equation}
    Using that $\nabla F(x_0)$ is an invertible matrix, we can use the operator norm of inverse matrix to estimate
    \begin{equation*}
       \|x-x_0\|= \left\|(\nabla F(x_0))^{-1}\nabla F(x_0)(x-x_0)\right\|\le \|(\nabla F(x_0))^{-1}\|\|\nabla F(x_0)(x-x_0)\|.
    \end{equation*}
    Replacing the previous bound in \eqref{eq:grad_neq0_proof_step0} and taking the supremum over $B_1(x_0)$ of the operator norm of $\nabla^2F(\xi)$ we have the bound
    \begin{equation}\label{eq:grad_neq0_proof_step1}
        \frac{1}{\|\nabla F(x_0)^{-1}\|}|x-x_0|-\frac{1}{2}\sup_{\xi\in B_1(x_0)}\|\nabla^2 F(\xi)\||x-x_0|^2\le |F(x)-F(x_0)|.
    \end{equation}
    Finally, choosing 
    $$
        \delta=\frac{1}{\|\nabla F(x_0)^{-1}\|\sup_{\xi\in B_1(x_0)}\|\nabla^2 F(\xi)\|},
    $$
    we have that for any $x\in B_\delta(x_0)$, the following estimate holds
    $$
        \frac{1}{2\|\nabla F(x_0)^{-1}\|}|x-x_0|\le \frac{1}{\|\nabla F(x_0)^{-1}\|}|x-x_0|-\frac{1}{2}\sup_{\xi\in B_1(x_0)}\|\nabla^2 F(\xi)\||x-x_0|^2.
    $$
    Combining this estimate with \eqref{eq:grad_neq0_proof_step1}, we get \eqref{eq:grad_neq0} with $C=2\|\nabla F(x_0)^{-1}\|$.
\end{proof}

\begin{proof}[Proof of \cref{lemma:ker_1dim}]
    We define
    \begin{equation}
        \lambda:=\inf_{w\in\ker\nabla F(x_0)^\perp}|\nabla F(x_0)w|>0,
    \end{equation}
    and we pick 
    $$
        \gamma=\min\left\{\frac{\lambda}{4\sup_{\xi\in B_1(x_0)}\|\nabla^2 F(\xi)\|},1\right\}.
    $$
    We take $\delta\in(0,\gamma)$ to be chosen later. For any $x\in B_\delta(x_0)$, we decompose $x=x_0+te+sw$ for some $s,t\in[0,\delta)$, with $e\in\ker\nabla F(x_0),\,|e|=1$ and $w\in\ker\nabla F(x_0)^\perp,\,|w|=1$.
    We consider the two cases $\frac{t^2}{\gamma}\leq s$ and $s<\frac{t^2}{\gamma}$.
    
    \textbf{First case.} If $\frac{t^2}{\gamma}\leq s$, the proof goes similarly to the one of \cref{lemma:grad_neq0}.
    Using a Taylor expansion \eqref{eq:taylor_expansion} of order two, taking norms and using the fact that $\nabla F(x_0)e=0$, we have
    \begin{equation*}
        \lambda s-\frac{1}{2}\sup_{\xi\in B_1(x_0)}\|\nabla^2 F(\xi)\| |x-x_0|^2\le\left|\nabla F(x_0)w s+\frac{1}{2}\nabla^2 F(\xi)(x-x_0,x-x_0)\right| = |F(x)-F(x_0)|.
    \end{equation*}
    Using the triangle inequality, and the assumption $\frac{t^2}{\gamma}\leq s$, we have
    \begin{equation*}
        |x-x_0|^2=|sw+te|^2\le 2s^2+2t^2\leq 2(\delta+\gamma) s\leq 4\gamma s,
    \end{equation*}
    where in the last inequality we have used that $\delta<\gamma$. Combining the two previous estimates, we obtain
    \begin{equation*}
        \frac{\lambda}{8\gamma} |x-x_0|^2\le\frac{\lambda}{2} s\le \left(\lambda-2\delta \sup_{\xi\in B_1(x_0)}\|\nabla^2 F(\xi)\|\right) s \le |F(x)-F(x_0)|,
    \end{equation*}
    where we have used the definition of $\gamma$. This shows the desired estimate holds with $C=\frac{8\gamma}{\lambda}$.
    
    \textbf{Second case.} We now consider $s<\frac{t^2}{\gamma}$. We start with the Taylor expansion \eqref{eq:taylor_expansion} of order three, and we take norms to get
    \begin{eqnarray*}
        \underbrace{\left|\nabla F(x_0) w s+\frac{1}{2}\nabla^2 F(x_0)(e,e)t^2+\nabla^2 F(x_0)(e,w)st+\nabla^2 F(x_0)(w,w)t^2+\frac{1}{6}\nabla^3F(\xi)(x-x_0)^3\right|}_{I}\le |F(x)-F(x_0)|.
    \end{eqnarray*}
    Using the triangle inequality, we can estimate
    $$
        \left|\nabla F(x_0) w s+\frac{1}{2}\nabla^2 F(x_0)(e,e)t^2\right|-\sup_{\xi\in B_1(x_0)}\|\nabla^2 F(x_0)\|\left(st+\frac{s^2}{2}\right)-\frac{1}{6}\sup_{\xi\in B_1(x_0)}\|\nabla^3F(\xi)\|(t^3+s^3+3s^2t+3st^2)\le I.
    $$
    We use the assumption of non-degeneracy \eqref{eq:1dimker_crucial_assumption}, and define
    \begin{equation*}
        0<\eta:=|\mathbb{P}_{\operatorname{col}\nabla F(x_0)^\perp}\nabla^2F(x_0)(e,e)|,
    \end{equation*}
    to substitute the first term in the previous inequality, obtaining
    \begin{align*}
        \frac{1}{2}\eta t^2-\sup_{\xi\in B_1(x_0)}\|\nabla^2 F(x_0)\|\left(st+\frac{s^2}{2}\right)-\frac{1}{6}\sup_{\xi\in B_1(x_0)}\nabla^3F(\xi)(t^3+s^3+3s^2t+3st^2)\leq |F(x)-F(x_0)|.
    \end{align*}
    We now use the fact that $s<\frac{t^2}{\gamma}$ and $s,t<\delta<\gamma$, to obtain
    $$
        \left(\frac{1}{2}\eta -\sup_{\xi\in B_1(x_0)}\|\nabla^2 F(x_0)\|\frac{3}{2}\frac{\delta}{\gamma}-\frac{4}{3}\sup_{\xi\in B_1(x_0)}\|\nabla^3F(\xi)\|\delta \right)t^2\leq |F(x)-F(x_0)|.
    $$
    We restrict
    $$
        \delta<\frac{\eta}{4\left(\sup_{\xi\in B_1(x_0)}\|\nabla^2 F(x_0)\|\frac{3}{2\gamma}+\frac{4}{3}\sup_{\xi\in B_1(x_0)}\|\nabla^3F(\xi)\|\right)},
    $$
    to obtain, for all $x\in B_\delta(x_0)$, the estimate
    \begin{equation*}
        \left(3\frac{\delta}{\gamma}+1\right)^{-1}|x-x_0|^2 \le t^2\le \frac{4}{\eta}|F(x)-F(x_0)|.
    \end{equation*}
    Restricting further $\delta<\gamma/3$, we can conclude that the desired estimate \eqref{eq:ker_1dim} holds with $C=\frac{8}{\eta}$.
\end{proof}

\section{Stability of RH waves} \label{sec:main}
Following the ideas of Wirosoetisno-Shepherd \cite{Wirosoetisno1999}, the strategy of all our stability proofs is to first reduce the problem to a finite dimensional question by using that energy does not cascade out of the first two shells. Without loss of generality, for simplicity from now on we assume that the rotation axis $p=e_3$ and the normalization $\|\mathbb{P}_2 \omega_0^{RH}\|_2=1$.
The normalization can be always achieved by noticing the quadratic nature of the Euler equation \eqref{eq:euler_rotating}. Namely, scaling the initial condition by a factor is equivalent to performing a rescaling in time of the equation. Next, we show that the solution to the Euler equation \eqref{eq:euler_rotating} remains close to a solution that is projected onto the first two shells.
\begin{lemma}\label{lem:projection1}
There exist constants $\bar\varepsilon > 0$ and $c_*>0$ such that the following holds. Let
\[
    \omega_0^{RH} = \Omega x_3 + Y_2,
\]
where $\Omega \in \mathbb{R}$ and $Y_2 \in \mathcal{H}^2$ is a Laplacian eigenfunction in the second shell, normalized so that $\|Y_2\|_2 = 1$. For any initial datum $\omega_0 \in L^2(\mathbb{S}^2)$ satisfying
\[
    \|\omega_0 - \omega_0^{RH}\|_2 < \bar\varepsilon,
       \]
let $\omega_t$ be the solution to the Euler equation with initial condition $\omega_0$. Then, for all $t \geq 0$,
\[
    \|\omega_t - \tilde\omega_t\|_2 \leq c_* \|\omega_0 - \omega_0^{RH}\|_2,
\]
where
\[
    \tilde\omega_t = \Omega x_3 + \frac{\mathbb{P}_2 \omega_t}{\|\mathbb{P}_2 \omega_t\|_2}.
\]
\end{lemma}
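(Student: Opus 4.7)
The plan is to decompose $\omega_t-\tilde\omega_t$ across the three pairwise orthogonal blocks $\mathcal{H}^1$, $\mathcal{H}^2$, $\mathcal{H}^{\geq 3}$ and control each piece with a separate conservation law. Since $\Omega x_3$ is a multiple of $Y_{1,0}\in\mathcal{H}^1$ and the renormalized second-shell projection belongs to $\mathcal{H}^2$, the Pythagorean theorem gives
\[
\|\omega_t-\tilde\omega_t\|_2^2 \;=\; \|\mathbb{P}_1\omega_t-\Omega x_3\|_2^2 \;+\; \bigl(\|\mathbb{P}_2\omega_t\|_2-1\bigr)^2 \;+\; \|\mathbb{P}_{\geq 3}\omega_t\|_2^2,
\]
so it suffices to bound each of the three scalars on the right by $C\|\omega_0-\omega_0^{RH}\|_2$.

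First, I would handle the $l=1$ block via conservation of angular momentum: since $\{Y_{1,m}\}_{m=-1}^{1}$ span the same space as $\{x_i\}_{i=1}^{3}$, the coefficients of $\omega_t$ against $Y_{1,m}$ are frozen, whence $\mathbb{P}_1\omega_t=\mathbb{P}_1\omega_0$ for every $t\geq 0$, and therefore
\[
\|\mathbb{P}_1\omega_t - \Omega x_3\|_2 \;=\; \|\mathbb{P}_1(\omega_0-\omega_0^{RH})\|_2 \;\leq\; \|\omega_0-\omega_0^{RH}\|_2.
\]
Next, since $\mathbb{P}_{\geq 3}\omega_0^{RH}=0$, \cref{lemma:Z_prime} directly yields
\[
\|\mathbb{P}_{\geq 3}\omega_t\|_2 \;\leq\; \sqrt{2}\,\|\mathbb{P}_{\geq 3}\omega_0\|_2 \;=\; \sqrt{2}\,\|\mathbb{P}_{\geq 3}(\omega_0-\omega_0^{RH})\|_2 \;\leq\; \sqrt{2}\,\|\omega_0-\omega_0^{RH}\|_2.
\]

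The only slightly delicate step is the second shell. Here I would combine conservation of enstrophy with the two bounds above and the normalization $\|\mathbb{P}_2\omega_0^{RH}\|_2=1$, writing
\[
\|\mathbb{P}_2\omega_t\|_2^2 - 1 \;=\; \bigl(\|\omega_0\|_2^2-\|\omega_0^{RH}\|_2^2\bigr) \;+\; \bigl(\|\mathbb{P}_1\omega_0^{RH}\|_2^2-\|\mathbb{P}_1\omega_0\|_2^2\bigr) \;-\; \|\mathbb{P}_{\geq 3}\omega_t\|_2^2.
\]
The reverse triangle inequality bounds the first two parentheses by $C\|\omega_0-\omega_0^{RH}\|_2$ (using that $\|\omega_0^{RH}\|_2$ and $\|\mathbb{P}_1\omega_0^{RH}\|_2$ are bounded in terms of $\Omega$), and the previous step controls the last term by $2\|\omega_0-\omega_0^{RH}\|_2^2$. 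Hence $\bigl|\|\mathbb{P}_2\omega_t\|_2^2-1\bigr| \leq C\|\omega_0-\omega_0^{RH}\|_2$.

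The hard part is not a genuine obstacle but a bookkeeping check: I need $\|\mathbb{P}_2\omega_t\|_2$ to stay uniformly bounded away from zero, both so that $\tilde\omega_t$ is well defined and so that the quadratic estimate converts to a linear one. Fixing $\bar\varepsilon$ small enough (depending on $\Omega$) forces $\|\mathbb{P}_2\omega_t\|_2\geq 1/2$ for all $t$, and then factoring $\bigl(\|\mathbb{P}_2\omega_t\|_2-1\bigr)\bigl(\|\mathbb{P}_2\omega_t\|_2+1\bigr)$ gives $\bigl|\|\mathbb{P}_2\omega_t\|_2-1\bigr|\leq C\|\omega_0-\omega_0^{RH}\|_2$. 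Summing the three Pythagorean contributions closes the proof with an explicit constant $c_*=c_*(\Omega)$.
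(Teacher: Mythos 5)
Your proposal is correct and follows essentially the same route as the paper's proof: conservation of angular momentum freezes the first shell, \cref{lemma:Z_prime} controls $\mathbb{P}_{\geq 3}\omega_t$, and conservation of enstrophy combined with these two pins down $\|\mathbb{P}_2\omega_t\|_2$ near $1$. Your write-up is in fact somewhat more explicit than the paper's (which simply asserts that ``similar bounds hold for $\omega_t$''), and the only cosmetic omission is the $\mathbb{P}_0$ component, which vanishes since vorticity has zero mean on $\mathbb{S}^2$.
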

\begin{proof}[Proof of \cref{lem:projection1}]
For any perturbation satisfying $\|\omega_0-\omega_0^{RH}\|_2\leq\varepsilon$, we readily obtain that
$$
\|\mathbb{P}_1\omega_0-\Omega x_3 \|_2\le \eps,\quad \|\mathbb{P}_{\ge 3}\omega_0\|_2\leq\eps,\quad\text{and}\quad |\|\mathbb{P}_2\omega_0\|_2-1|\leq\eps.
$$
The conservation of the angular momentum and \cref{lemma:Z_prime} imply that similar bounds hold for $\omega_t$, the solution to \eqref{eq:euler_rotating}, for all $t\geq0$. Namely, we have that
$$
\|\mathbb{P}_1\omega_t-\Omega x_3 \|_2\le \eps,\quad \|\mathbb{P}_{\ge 3}\omega_0\|_2\leq 2\eps,\quad\text{and}\quad |\|\mathbb{P}_2\omega_0\|_2-1|\leq 2\eps.
$$
Hence, we can conclude that if $\|\omega_0-\omega_0^{RH}\|_2\leq\varepsilon$ for $\varepsilon>0$ small enough, then for any $t\geq0$ it holds
$$
\left\|\omega_t- \frac{\mathbb{P}_2\omega_t}{\|\mathbb{P}_2\omega_t\|_2}-\Omega x_3 \right\|_2\le c\eps.
$$ 
This concludes the proof of \cref{lem:projection1}.
\end{proof}
Now that we have reduced the problem to the evolution of the Fourier coefficients in the second shell
\begin{equation}
    \tilde\omega_t = \Omega x_3 + \sum_{m=-2}^2 \tilde\omega_{2,m}(t) Y_{2,m},\qquad\mbox{and}\qquad \omega_0^{RH} = \Omega x_3 + \sum_{m=-2}^2 \omega^{RH}_{2,m} Y_{2,m},
    \label{eq:projected_solution}
\end{equation}
with the normalization condition on the Fourier coefficients
\begin{equation}
    \sum_{m=-2}^2 |\tilde\omega_{2,m}(t)|^2 = 1= \sum_{m=-2}^2 |\omega^{RH}_{2,m}|^2,
    \qquad \forall t \geq 0,
    \label{eq:normalization_condition}
\end{equation}
we can proceed to study the stability of the RH wave by considering the behaviour of the Casimirs. The following lemma shows that the Casimirs of the projected solution $\tilde\omega_t$ remain close to the Casimirs of the RH wave $\omega_0^{RH}$. For simplicity, we focus on understanding the behaviour of the polynomial Casimirs
    \begin{equation}
        C_k(\omega):=\int_{\mathbb{S}^2}\omega^k\;\mathrm{d}\mathcal{S},\quad k\in\enne.
        \label{eq:PolCasimirs} 
    \end{equation}

\begin{lemma}\label{lem:projection2}
    Under the same assumptions and notation as in \cref{lem:projection1}, for each $k \in \mathbb{N}$ there exists a constant $c_k > 0$ such that for all $t \geq 0$,
    \[
        \left| C_k(\tilde\omega_t) - C_k(\omega_0^{RH}) \right| \leq c_k \|\omega_0 - \omega_0^{RH}\|_2.
    \]
\end{lemma}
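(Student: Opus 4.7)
The proof rests on two facts already available in the excerpt. First, the polynomial Casimirs are conserved by the Euler flow, so $C_k(\omega_t) = C_k(\omega_0)$ for every $t \ge 0$. Second, \cref{lem:projection1} gives $\|\omega_t - \tilde\omega_t\|_2 \le c_* \|\omega_0 - \omega_0^{RH}\|_2$. Combining these via the triangle inequality,
\[
|C_k(\tilde\omega_t) - C_k(\omega_0^{RH})| \;\le\; |C_k(\tilde\omega_t) - C_k(\omega_t)| \,+\, |C_k(\omega_0) - C_k(\omega_0^{RH})|,
\]
so it suffices to establish a Lipschitz-type bound $|C_k(u) - C_k(v)| \lesssim \|u-v\|_2$ on a suitably bounded set, and to apply it to the two pairs $(\tilde\omega_t,\omega_t)$ and $(\omega_0,\omega_0^{RH})$.

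For the Lipschitz step the plan is to use the elementary identity $a^k - b^k = (a-b)\sum_{j=0}^{k-1} a^{j} b^{k-1-j}$, Cauchy--Schwarz to extract $\|u-v\|_2$ from the integrand, and Hölder with conjugate exponents $\tfrac{k-1}{j}$ and $\tfrac{k-1}{k-1-j}$ on each monomial $u^{j} v^{k-1-j}$, which yields
\[
|C_k(u) - C_k(v)| \;\le\; \|u-v\|_2 \sum_{j=0}^{k-1} \|u\|_{2(k-1)}^{j}\, \|v\|_{2(k-1)}^{k-1-j}.
\]
The remaining task is to produce a uniform $L^{2(k-1)}$ bound on each of $\omega_0^{RH}$, $\tilde\omega_t$, $\omega_0$, and $\omega_t$. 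The function $\omega_0^{RH}$ is smooth and explicit. The function $\tilde\omega_t$ lies in the finite-dimensional subspace $\mathcal{H}^1 \oplus \mathcal{H}^2$ with coefficients controlled by the normalization \eqref{eq:normalization_condition} and by $|\Omega|$, hence it is uniformly bounded in $L^\infty$. Finally, $\|\omega_t\|_{2(k-1)} = \|\omega_0\|_{2(k-1)}$ by Casimir conservation, which is finite under the regularity hypothesis on the initial datum needed to make the Casimirs well-defined in the first place.

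Inserting these bounds and using \cref{lem:projection1} on the $L^2$ factor for the first pair, and the hypothesis $\|\omega_0 - \omega_0^{RH}\|_2 < \bar\eps$ on the second, gives the claim with a constant $c_k$ depending on $\Omega$, $\|\omega_0^{RH}\|_\infty$, $\|\omega_0\|_{2(k-1)}$, and $c_*$. The argument is essentially bookkeeping; the only genuine structural input is the closeness of $\omega_t$ to the finite-dimensional object $\tilde\omega_t$ furnished by \cref{lem:projection1}, so no real obstacle arises beyond matching the Hölder exponents.
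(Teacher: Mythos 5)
Your overall skeleton (conservation of Casimirs, triangle inequality through the two pairs $(\tilde\omega_t,\omega_t)$ and $(\omega_0,\omega_0^{RH})$, plus an $L^2$-Lipschitz bound for $C_k$) matches the paper's, but the way you obtain the Lipschitz bound has a genuine gap. Your estimate
\[
|C_k(u)-C_k(v)|\le \|u-v\|_2\sum_{j=0}^{k-1}\|u\|_{2(k-1)}^{j}\,\|v\|_{2(k-1)}^{k-1-j}
\]
produces a constant that depends on $\|\omega_0\|_{2(k-1)}$ and $\|\omega_t\|_{2(k-1)}$. The lemma is stated for $\omega_0\in L^2(\mathbb{S}^2)$ with only $\|\omega_0-\omega_0^{RH}\|_2<\bar\varepsilon$ assumed: these higher norms need not be finite, and even when they are, they are not controlled by the $L^2$ smallness (an $L^2$-small perturbation can have arbitrarily large $L^{2(k-1)}$ norm). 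So your $c_k$ is not uniform over the admissible class of initial data, which is what the statement requires; appealing to ``the regularity hypothesis needed to make the Casimirs well-defined'' imports an assumption the lemma does not make. Note also that the lemma never needs $C_k(\omega_0)$ or $C_k(\omega_t)$ themselves to be defined --- only $C_k$ evaluated at the bounded functions $\tilde\omega_t$ and $\omega_0^{RH}$.

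The paper's device for exactly this issue is the truncated Casimirs \eqref{eq:modif_Casimirs}: $\tilde C_k(\omega)=\int_{\mathbb{S}^2}\omega^k\chi(\omega/M)\,\mathrm{d}\mathcal{S}$ with $M=|\Omega|+(1+\sqrt{3})\sqrt{5/\pi}$. Since $s\mapsto s^k\chi(s/M)$ is globally Lipschitz and bounded, $\tilde C_k$ is Lipschitz on all of $L^2(\mathbb{S}^2)$ with a constant depending only on $k$, $\Omega$ and $|\mathbb{S}^2|$; it is still a Casimir of the form $\int f(\omega)$, hence conserved along the flow; and it coincides with the polynomial Casimir $C_k$ on $\tilde\omega_t$ and $\omega_0^{RH}$ because these lie in $\mathcal{H}^1\oplus\mathcal{H}^2$ with normalized coefficients and therefore satisfy $\|\cdot\|_{L^\infty}\le M$. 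Running your triangle inequality with $\tilde C_k$ in place of $C_k$ and converting back to $C_k$ only at the two endpoints closes the argument with a uniform constant. I recommend you adopt this truncation; the rest of your bookkeeping is fine.
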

\begin{proof}[Proof of \cref{lem:projection2}]
We start by introducing a useful modification of the Casimirs. Let $\chi\in\mathcal{C}^\infty(\erre)$ be even and equal to $1$ on $[0,1]$, and to $0$ on $[2,\infty)$. Then, consider the modified Casimirs
\begin{equation}
    \tilde{C}_k(\omega):=\int_{\mathbb{S}^2}\omega^k\chi\biggl(\frac{\omega}{|\Omega|+(1+\sqrt{3})\sqrt{5/\pi}}\biggr)\;\mathrm{d}\mathcal{S},\quad k\in\enne,
    \label{eq:modif_Casimirs}
\end{equation}
satisfy a Lipschitz stability condition with respect to $L^2$
\begin{equation}
    |\tilde{C}_k(\omega_1)-\tilde{C}_k(\omega_2)|\leq c_k \|\omega_1-\omega_2\|_2.
    \label{eq:Lips_of_casimirs}
\end{equation}
For $\omega_t$ the solution to \eqref{eq:euler_rotating}, the modified Casimirs are conserved quantities. Thus, we have
$$
\tilde{C}_k(\omega_t)=\tilde{C}_k(\omega_0), \qquad   \forall t\geq0.
$$
Hence, by \eqref{eq:Lips_of_casimirs}, the triangle inequality, and \cref{lem:projection1}, we get that there exists a constant $c_k>0$ such that
$$
|\tilde{C}_k(\tilde{\omega}_t)-\tilde{C}_k(\omega^{RH}_0)|\le |\tilde{C}_k(\tilde{\omega}_t)-\tilde{C}_k(\omega_t)|+|\tilde{C}_k(\omega_t)-\tilde{C}_k(\omega_0)| +|\tilde{C}_k(\omega_0)-\tilde{C}_k(\omega^{RH}_0)|\leq c_k\varepsilon,\quad\forall t\geq0.
$$
Finally, we observe that $\|\tilde\omega_t\|_{L^\infty}\leq |\Omega|+(1+\sqrt{3})\sqrt{5/\pi}$ for any $t\geq0$, and the same holds for $\omega_0^{RH}$. Thus, the cut-off function $\chi$ in \eqref{eq:modif_Casimirs} is always equal to $1$ when evaluated at $\tilde\omega_t$ and $\omega_0^{RH}$. This implies that
$$\tilde{C}_k(\tilde{\omega}_t)=C_k(\tilde{\omega}_t)\quad\text{and}\quad \tilde{C}_k(\omega_0^{RH})=C_k(\omega_0^{RH}),$$
and the proof is concluded.
\end{proof}

Now the game is to create a non-degenerate change of variables map directly on the Fourier coefficients $\tilde\omega_{2,m}(t)$, for $m=-2,-1,0,1,2$. For $k\in\enne$, we define the functions $F_k:\mathbb{R}^5 \to \mathbb{R}$ by
\[
    F_k(\tilde\omega_{2,-2}, \tilde\omega_{2,-1}, \tilde\omega_{2,0}, \tilde\omega_{2,1}, \tilde\omega_{2,2}) = C_k\left(\Omega x_3+\sum_{m=-2}^2 \tilde\omega_{2,m} Y_{2,m}\right).
\]
Now the goal is to show that a map like $F = (F_2, F_3, F_4, F_5): \mathbb{S}^4 \to \mathbb{R}^4$ is non-degenerate at the point $(\tilde\omega_{2,m}^{RH})_{m=-2}^2\in \mathbb{S}^4$ corresponding to the RH wave $\omega_0^{RH}$. The restriction to $\mathbb{S}^4$ follows from the normalization \cref{eq:normalization_condition}. If the map is not degenerate, small variations in the values of the Casimirs $C_k(\tilde\omega_t)$ will then correspond to small variations in the Fourier coefficients $(\tilde\omega_{2,m}(t))_{m=-2}^2$, which will allow us to conclude stability, by applying \cref{lem:projection1} and \cref{lem:projection2}.

However, as noted in the introduction \eqref{eq:degeneracy_case}, we should notice that the stability results are set only up to the appropriate rotations. Hence, we can not expect to find a non-degenerate map $F$ as written above, unless we further reduce the degrees of freedom by using the rotational invariance of the problem. This will be done in the next section, for the different cases. We start with the case $\Omega=0$, as after rotations we are left with a single degree of freedom in the second shell, which can be easily handled like in the paper of Elgindi~\cite{elgindi2023remarkstabilityenergymaximizers} for $\mathbb{T}^2$.

\subsection{Proof of \cref{thm:2}} \label{sec:thm2}
In the case $\Omega=0$, we have that small perturbations can induce a rotation in any direction. Rotations through any axis have 3 degrees of freedom, 2 for the axis of rotation and 1 for the rotation angle.
The second shell has 5 degrees of freedom, which means that we can reduce the problem to a 2-dimensional one. Finally, if we use the normalization \eqref{eq:normalization_condition} we are left with a single degree of freedom.
\begin{lemma}
    Let $\omega_0^{RH} = Y_2 \in \mathcal{H}^2$ be a Laplacian eigenfunction in the second shell, normalized so that $\|Y_2\|_2 = 1$. There exists a rotation $R \in SO(3)$ such that the rotated vorticity can be decomposed as
    $$
        \omega_0^{RH}(Rx)=A Y_{2,0} + B Y_{2,2},
    $$ 
    with $A^2 + B^2 = 1$ and $3/7\le A^2 $.
    \label{lemma:rotation_case_Omega0}
\end{lemma}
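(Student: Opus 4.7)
Plan: The core of the argument is the classical identification of $\mathcal{H}^2$ with the $5$-dimensional space of symmetric traceless $3\times 3$ real matrices via $Y(x) = x^{\top} M x$ (restricted to the unit sphere), under which the precomposition $Y(\cdot) \mapsto Y(R\cdot)$ becomes the conjugation action $M \mapsto R^{\top} M R$. The first step is to apply the spectral theorem to the symmetric matrix $M$ associated with $Y_2$: this produces an orthogonal diagonalizer, and if its determinant is $-1$ we negate one of its columns to land in $SO(3)$, which leaves the resulting diagonal unchanged. We thereby obtain $R_0 \in SO(3)$ with $R_0^{\top} M R_0 = \mathrm{diag}(\mu_1,\mu_2,\mu_3)$, where $\mu_1 + \mu_2 + \mu_3 = 0$ by the traceless condition.

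Since $Y_2(R_0 x) = \mu_1 x_1^2 + \mu_2 x_2^2 + \mu_3 x_3^2$ lies in $\mathrm{span}(Y_{2,0}, Y_{2,2})$, the plan is to match coefficients against the explicit formulas $Y_{2,0} = \sqrt{5/(16\pi)}\,(2x_3^2 - x_1^2 - x_2^2)$ and $Y_{2,2} = \sqrt{15/(16\pi)}\,(x_1^2 - x_2^2)$, which yields
\[
    A = \mu_3\sqrt{4\pi/5}, \qquad B = (\mu_1-\mu_2)\sqrt{4\pi/15}.
\]
The normalization $\|Y_2\|_2 = 1$, equivalently $A^2 + B^2 = 1$, then pins down $\mu_1^2+\mu_2^2+\mu_3^2 = 15/(8\pi)$.

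The last step exploits the freedom to permute the eigenvalues. A $90^\circ$ rotation around a coordinate axis sits in $SO(3)$ and, at the level of a diagonal matrix under the conjugation action, transposes two of the diagonal entries; composing such rotations realizes every permutation of $(\mu_1,\mu_2,\mu_3)$. Thus, after one more rotation we may assume $\mu_3^2 = \max_i \mu_i^2$. Ordering $\mu_1 \le \mu_2 \le \mu_3$ and using $\mu_3 \ge |\mu_1|$ together with $\mu_2 = -\mu_1-\mu_3$ forces $\mu_1,\mu_2 \in [-\mu_3,0]$, hence $\mu_1^2 + \mu_2^2 \le \mu_3^2$. Combined with $\sum_i \mu_i^2 = 15/(8\pi)$ this gives $\mu_3^2 \ge 15/(16\pi)$, and so $A^2 \ge (4\pi/5)\cdot 15/(16\pi) = 3/4 \ge 3/7$. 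The argument has no real obstacle; the only place requiring care is the constant bookkeeping linking the eigenvalues $(\mu_1,\mu_2,\mu_3)$ to the Fourier coefficients $(A,B)$ via the explicit normalizations of the spherical harmonics.
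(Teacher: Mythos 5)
Your proof is correct and follows essentially the same route as the paper: identify $Y_2$ with a traceless symmetric matrix via $Y_2(x)=x^{\top}Mx$, diagonalize by an element of $SO(3)$, place the eigenvalue of largest modulus in the third slot, and read off $A,B$ from the eigenvalues. Your closing estimate ($\mu_1^2+\mu_2^2\le\mu_3^2$ combined with $\sum_i\mu_i^2=15/(8\pi)$) in fact gives the sharper bound $A^2\ge 3/4$, which implies the stated $3/7$; the one small imprecision is that the step ``$\mu_1\le\mu_2\le\mu_3$ and $\mu_3\ge|\mu_1|$'' tacitly assumes the dominant eigenvalue is positive, but in the case $\mu_3<0$ one still gets $\mu_1,\mu_2\in[0,-\mu_3]$, hence $\mu_1\mu_2\ge0$ and $\mu_1^2+\mu_2^2\le(\mu_1+\mu_2)^2=\mu_3^2$, so the conclusion is unaffected.
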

We defer the proof of \cref{lemma:rotation_case_Omega0} to the end of this section.
We can now proceed to the proof of stability in the case $\Omega=0$.

\begin{proof}[Proof of \cref{thm:2}]
Using the representation of \cref{lemma:rotation_case_Omega0}, we have that 
$$
\omega_0^{RH}(R_0x) = A^{RH} Y_{2,0} + B^{RH} Y_{2,2},
$$
with $|A^{RH}|^2 + |B^{RH}|^2 = 1$ and $3/7 \leq |A^{RH}|^2$. Using the notation of \cref{lem:projection1}, the projection $\tilde\omega_t$ of the solution can also be written as
\begin{equation*}
    \tilde\omega_t(R(t)x) = A(t) Y_{2,0} + B(t) Y_{2,2},
\end{equation*}
with $A(t)^2 + B(t)^2 = 1$  and $3/7\le A^2(t)$ for all $t \geq 0$. The goal is now to show that the Casimirs $C_k(\tilde\omega_t)$ can be directly expressed in terms of the functions $A(t)$ and $B(t)$. For a general vorticity in the second shell with the given representation we compute the Casimir $C_3$ explicitly as follows:
    \begin{equation}
        \int_{\mathbb{S}^2}(AY_{2,0}+BY_{2,2})^3\;\mathrm{d}\mathcal{S}=\frac{1}{7}\sqrt{\frac{5}{\pi}}A(A^2-3B^2).
        \label{eq:choice_of_k0}
    \end{equation}
Using the normalization we can substitute $B^2=1-A^2$, producing the following scalar function $F:\mathbb{R}\to\mathbb{R}$ of a single real variable
    \begin{equation*}
        F(A)=\frac{1}{7}\sqrt{\frac{5}{\pi}}A(4A^2-3).
    \end{equation*}
By \cref{lem:projection2} and the fact that Casimirs are invariant under rotations, we have that for all $t\geq0$ the following estimate holds
    \begin{equation*}
    |F(A(t))-F(A^{RH})|=|C_3(\tilde\omega_t(R(t)x))-C_3(\omega^{RH}(R_0x))|=|C_3(\tilde\omega_t(x))-C_3(\omega^{RH}(x))|\leq c_3\varepsilon,
    \end{equation*}
As long as $F'(A^{RH})\ne0$, we can use the inverse function theorem \cref{lemma:grad_neq0}, to show that there exist constants $C_1>0$ and $\eps_0>0$ such that if $|A-A^{RH}|<\eps_0$, then we have the stability estimate
$$
    |A-A^{RH}|\leq C|F(A)-F(A^{RH})|.
$$
Under the assumption that the initial perturbation is sufficiently small $0<\bar\eps<\frac{1}{C}\eps_0$, for some $C$ large enough, we can assert that
$$
|A(0)-A^{RH}|\le \|\omega_0-\omega_0^{RH}\|_2<\bar\eps\le \eps_0.
$$
By continuity of the function $A(t)$, we can conclude that $|A(t)-A^{RH}|<\bar\eps$ for $t$ small enough. Hence, we have that for $t$ small enough the following estimate holds
    $$
        \frac{1}{C}\|\tilde\omega_t(R(t)x)-\omega^{RH}(R_0x) \|_2\leq |A(t)-A^{RH}|\leq C_1|F(A(t))-F(A^{RH})|<C c_3 \|\omega_0 - \omega_0^{RH}\|_2\le C \bar\eps,
    $$
    where we have used \cref{lem:projection2} in the last inequality. Hence, picking $\bar\eps<\frac{1}{C}\eps_0$ for $C$ large enough, we can conclude that $|A(t)-A^{RH}|<\eps_0$ for all $t\geq0$, and we can apply the inverse function theorem for all times.

    Concluding, as long as 
    $$
        F'(A^{RH})\ne 0,
    $$ 
    we have the desired stability estimate
    $$
        \inf_{R\in SO(3)}\|\tilde\omega_t(Rx)-\omega^{RH}(x)\|_2\leq C\|\omega_0 - \omega_0^{RH}\|_2 .
    $$
    Finally, to check the derivative of $F$ at the point $A^{RH}$, we compute
    \begin{equation*}
        F'(A)=\frac{3}{7}\sqrt{\frac{5}{\pi}}(4A^2-1)
    \end{equation*}
    vanishes only at $A^2=1/4$. Since $A^{RH}$ satisfies $|A^{RH}|^2\ge 3/7>1/4$, we have that $F'(A^{RH})\ne0$. This concludes the proof of stability for the case $\Omega=0$.
\end{proof}

\begin{proof}[Proof of \cref{lemma:rotation_case_Omega0}]
Given any vorticity function purely in the second shell $\omega\in\mathcal{H}^2$:
$$
\omega=\sum_{m=-2}^2\omega_{2,m} Y_{2,m},
$$
there exists a symmetric traceless matrix $Q\in\erre^{3\times3}$ such that
\begin{equation}
    \omega(x)=x^TQx.
    \label{eq:quadratic_form}
\end{equation}
Here, $x\in\esse^2$ are the coordinates of a point on the embedded sphere $\mathbb{S}^2\subset\mathbb{R}^3$. Explicitly, the matrix $Q$ is given by
\begin{equation}
Q=\sqrt{\frac{15}{16\pi}}\begin{bmatrix}
2\omega_{2,2}-\frac{\sqrt{3}}{3}\omega_{2,0} & \omega_{2,-2} & \omega_{2,1} \\
\omega_{2,-2} & -2\omega_{2,2}-\frac{\sqrt{3}}{3}\omega_{2,0} & \omega_{2,-1} \\
\omega_{2,1} & \omega_{2,-1} & \frac{2\sqrt{3}}{3}\omega_{2,0}
\end{bmatrix}.
\label{eq:matrixQ_expl}
\end{equation}
Since $Q$ is symmetric, there exists an orthogonal matrix $R\in\erre^{3\times3}$ such that $R^TQR=diag(\lambda_1,\lambda_2,\lambda_3)$ is diagonal. Up to re-arranging the columns of $R$, we can assume that $R\in\mathbb{SO}(3)$, that is to say, $R$ is a rotation matrix satisfying
\begin{equation*}
    R^T R=I\quad\text{and}\quad\det R=1.
\end{equation*}
Moreover, we can choose the order of the collumns such that both $R\in SO(3)$ and the eigenvalues of $R^TQR$ satisfy the following ordering condition
\begin{equation}
    \max(|\lambda_1|,|\lambda_2|)\leq|\lambda_3|.
    \label{eq:lambda3_max}
\end{equation}

Now, rotating the sphere $\esse^2$ according to the matrix $R$ corresponds to
\begin{equation*}
    \omega(Rx):=x^T(R^TQR)x=\lambda_1x_1^2+\lambda_2x_2^2+\lambda_3x_3^2,
\end{equation*}
where $\lambda_1,\,\lambda_2,\,\lambda_3\in\erre$ are the eigenvalues of $Q$. Since the trace is invariant with similarity transformations, we also have $\lambda_1+\lambda_2+\lambda_3=0$. Finally, recalling that
\begin{equation*}
    Y_{2,0}=\sqrt{\frac{5}{16\pi}}(3x_3^2-1),\quad Y_{2,2}=\sqrt{\frac{15}{16\pi}}(x_1^2-x_2^2),
\end{equation*}
we can show after some algebra that
\begin{equation}
    \omega(Rx)=\underbrace{\frac{\lambda_3}{2}\sqrt{\frac{16\pi}{5}}}_{A}Y_{2,0}+\underbrace{\frac{\lambda_1-\lambda_2}{2}\sqrt{\frac{16\pi}{15}}}_{B}Y_{2,2}.
    \label{eq:relation_AB_lambda}
\end{equation}
By \eqref{eq:lambda3_max}, we have that $|\lambda_1-\lambda_2|\le 2\lambda_3$.
Moreover, since 
\begin{eqnarray*}
    1=\|\omega\|^2&=&A^2+B^2\\
    &=&\left|\frac{\lambda_3}{2}\sqrt{\frac{16\pi}{5}}\right|^2+\left|\frac{\lambda_1-\lambda_2}{2}\sqrt{\frac{16\pi}{15}}\right|^2\\
    &\le&\left|\frac{\lambda_3}{2}\sqrt{\frac{16\pi}{5}}\right|^2+\left|\lambda_3\sqrt{\frac{16\pi}{15}}\right|^2=\left(1+\frac{4}{3}\right)\left|\frac{\lambda_3}{2}\sqrt{\frac{16\pi}{5}}\right|^2\le \frac{7}{3}A^2,\\
\end{eqnarray*}
we can conclude that $3/7\le |A|^2$. This concludes the proof of \cref{lemma:rotation_case_Omega0}.
\end{proof}
\subsection{Proof of \cref{thm:main}}
The proof of \cref{thm:main} follows the same lines of the proof of \cref{thm:2}, with the main difference being that the degrees of freedom left after modding out the rotations are now three instead of one, see \cref{subsec:reduced_dof} below. This means that we need to use three Casimirs instead of one, and the non-degeneracy condition is more involved. 

To set up the steps of the proof, we consider a Rosby-Haurwitz wave of the form
$$
\omega_0^{RH} = \Omega x_3 + \sum_{m=-2}^2 \omega^{RH}_{2,m} Y_{2,m}.
$$
Our analysis distinguishes two cases, depending on the values of the Fourier coefficients $\omega^{RH}_{2,-1}$ and $\omega^{RH}_{2,1}$. Namely, we distinguish between the cases
\begin{equation}
    \sqrt{|\omega^{RH}_{2,-1}|^2+|\omega^{RH}_{2,1}|^2}\neq0,
    \label{eq:non_degenerate_case}
\end{equation}
and
\begin{equation}
    \sqrt{|\omega^{RH}_{2,-1}|^2+|\omega^{RH}_{2,1}|^2}=0.
    \label{eq:degenerate_case}
\end{equation}
The first case \eqref{eq:non_degenerate_case} is the non-degenerate one, where we can use three Casimirs to control the three degrees of freedom left after modding out the rotations, see \cref{non_degenerate}. This is akin to the situation in the proof of \cref{thm:2}. The second case \eqref{eq:degenerate_case} is the degenerate one, when the Jacobian of the map $F$ is rank deficient for any choice of Casimirs. Hence, to show stability we need to check conditions on higher order derivatives of the map $F$. In fact, we will show that choosing the Casimirs appropriately we are always in the case of a fold point and we can obtain a square root stability, see \cref{lemma:ker_1dim}. Within the degenerate case, we will have to further distinguish the salient zonal case that requires a specific treatment, see \cref{zonal_case}.

\subsubsection{Reduction of the degrees of freedom}\label{subsec:reduced_dof}
We start by showing how we reduce the degrees of freedom, by taking advantage of the invariance under rotations through the axis $p=e_3$. After applying \cref{lem:projection1}, we have reduced the problem to the study of $\tilde\omega_t$, which is given by a linear combination of the following spherical harmonics:
\begin{align}
    Y_{1,0}(\theta,\phi)=\sqrt{\frac{3}{4\pi}}\cos{\theta},\quad Y_{2,0}(\theta,\phi)=\sqrt{\frac{5}{16\pi}}(3\cos^2\theta-1),\nonumber \nonumber\\
    Y_{2,-1}(\theta,\phi)=\sqrt{\frac{15}{16\pi}}\sin(2\theta)\sin{\phi},\quad Y_{2,1}(\theta,\phi)=\sqrt{\frac{15}{16\pi}}\sin(2\theta)\cos{\phi}, \nonumber
   \\
    Y_{2,-2}(\theta,\phi)=\sqrt{\frac{15}{16\pi}}\sin^2\theta\sin(2\phi),\quad Y_{2,2}(\theta,\phi)=\sqrt{\frac{15}{16\pi}}\sin^2\theta\cos(2\phi).\label{eq:2ndshell}
\end{align}
We notice that both $Y_{1,0}$ and $Y_{2,0}$ are zonal, hence they are independent of rotation in the $\phi$ variable. On the other hand we have that
$$
Y_{2,-1}(\theta,\phi+\phi_0)=\cos\phi_0 Y_{2,-1}(\theta,\phi)+\sin\phi_0 Y_{2,-1}(\theta,\phi),\quad
Y_{2,1}(\theta,\phi+\phi_0)=-\sin\phi_0 Y_{2,-1}(\theta,\phi)+\cos\phi_0 Y_{2,-1}(\theta,\phi),
$$
and
$$
Y_{2,-2}(\theta,\phi+\phi_0)=\cos2\phi_0 Y_{2,-2}(\theta,\phi)+\sin2\phi_0 Y_{2,-2}(\theta,\phi),\quad
Y_{2,2}(\theta,\phi+\phi_0)=-\sin2\phi_0 Y_{2,-2}(\theta,\phi)+\cos2\phi_0 Y_{2,-2}(\theta,\phi).
$$
Hence, for any vorticity function of the form
$$
\omega(\theta,\phi)=\omega_{1,0}  Y_{1,0}(\theta,\phi)+\omega_{2,0} Y_{2,0}(\theta,\phi)+\omega_{2,1} Y_{2,1}(\theta,\phi)+\omega_{2,-1} Y_{2,-1}(\theta,\phi)+\omega_{2,-2} Y_{2,-2}(\theta,\phi)+\omega_{2,2} Y_{2,2}(\theta,\phi),
$$
there exist rotations $\hat\phi,\bar\phi\in [0,2\pi]$ such that
\begin{equation}\label{eq:1strep}
    \omega(\theta,\phi+\hat\phi)=\omega_{1,0}  Y_{1,0}(\theta,\phi)+\omega_{2,0} Y_{2,0}(\theta,\phi)+\hat\omega_{2,1} Y_{2,1}(\theta,\phi)+\hat\omega_{2,-2} Y_{2,-2}(\theta,\phi)+\hat\omega_{2,2} Y_{2,2}(\theta,\phi),
\end{equation}
and
\begin{equation}\label{eq:2ndrep}
\omega(\theta,\phi+\bar\phi)=\omega_{1,0}  Y_{1,0}(\theta,\phi)+\omega_{2,0} Y_{2,0}(\theta,\phi)+\bar\omega_{2,1} Y_{2,1}(\theta,\phi)+\bar\omega_{2,-1} Y_{2,-1}(\theta,\phi)+\bar\omega_{2,2} Y_{2,2}(\theta,\phi),
\end{equation}
for some values $\hat\omega_{2,1},\,\hat\omega_{2,-2},\,\hat\omega_{2,2},\,\bar\omega_{2,1},\,\bar\omega_{2,-1},\,\bar\omega_{2,2}$.
Furthermore, if we define $\alpha$ by the relationship
$$
\cos\alpha= \frac{\bar\omega_{2,1}}{\sqrt{\bar\omega^2_{2,1}+\bar\omega^2_{2,2}}},\qquad\sin\alpha= \frac{\bar\omega_{2,2}}{\sqrt{\bar\omega^2_{2,1}+\bar\omega^2_{2,2}}},
$$
we can express 
\begin{equation}\label{eq:3rdrep}
\omega(\theta,\phi+\bar\phi)=\omega_{1,0}  Y_{1,0}(\theta,\phi)+\omega_{2,0} Y_{2,0}(\theta,\phi)+\sqrt{\bar\omega^2_{2,1}+\bar\omega^2_{2,2}} Y_{2,1}(\theta,\phi+\alpha)+\bar\omega_{2,2} Y_{2,2}(\theta,\phi).
\end{equation}

\subsubsection{The non-degenerate case \eqref{eq:non_degenerate_case}}\label{non_degenerate}
We start in the same way to the case $\Omega=0$, by using \cref{lem:projection1} to reduce the problem to the study of the evolution of the Fourier coefficients in the second shell. For $\omega_0$ close enough to $\omega_0^{RH}$, we can write the projected solution as
\begin{equation*}
    \tilde\omega_t = \Omega x_3 + \sum_{m=-2}^2 \tilde\omega_{2,m}(t) Y_{2,m},  
\end{equation*}
with the normalization condition on the Fourier coefficients. By performing a rotation on the axis $p=e_3$, we can re-write the projected solution using the representation \eqref{eq:1strep}. Namely, there exists $u\in\R$ such that
\begin{equation}
    \tilde\omega_t(R^u_{e_3}x) = \Omega x_3 + A(t) Y_{2,0} + B(t) Y_{2,1} + C(t) Y_{2,-2} + D(t) Y_{2,2},
    \label{eq:representation_1_first_two_shells}
\end{equation}
With this representation, we consider the mapping $F:\mathbb{S}^4\to\erre^3$ defined by
\begin{equation}
    F(A,B,C,D)=\begin{pmatrix}C_3(\omega_{A,B,C,D}),C_4(\omega_{A,B,C,D}),C_5(\omega_{A,B,C,D})\end{pmatrix},
\end{equation}
where $C_k$ are the standard polynomial Casimirs and $\omega_{A,B,C,D}$ is the vorticity function defined by
\begin{equation*}
    \omega_{A,B,C,D}(x):=\Omega x_3 + A Y_{2,0}(x) + B Y_{2,1}(x) + C Y_{2,-2}(x) + D Y_{2,2}(x).
\end{equation*}
Explicitly, we can compute the Casimirs $C_3,C_4,C_5$ which are used to define the map $F$:
\begin{align*}
    \int_{\mathbb{S}^2}(\omega_{A,B,C,D})^3\;\mathrm{d}\mathcal{S}=&\frac{10 A^{3} + 15 \sqrt{3} B^{2} D + A \left(15 B^{2} - 30 C^{2} - 30 D^{2} + 56 \Omega^{2} \pi \right)}{14 \sqrt{5 \pi}}, \\
    \int_{\mathbb{S}^2}(\omega_{A,B,C,D})^4\;\mathrm{d}\mathcal{S}=&\frac{2}{7} \left( 11 A^{2} + 3 \left( 3 B^{2} + C^{2} + D^{2} \right) \right) \Omega^{2} + \frac{15 \left( A^{2} + B^{2} + C^{2} + D^{2} \right)^{2}}{28 \pi} + 4 \Omega^{4} \pi, \\
    \int_{\mathbb{S}^2}(\omega_{A,B,C,D})^5\;\mathrm{d}\mathcal{S}=&\sqrt{5}\frac{75 (A^{2} + B^{2} + C^{2} + D^{2}) \left( 2 A^{3} + 3 \sqrt{3} B^{2} D + 3 A \left( B^{2} - 2 (C^{2} + D^{2}) \right) \right)}{924 \pi^{3/2}}\\
    &+\sqrt{5}\frac{220 \left( 8 A^{3} + 3 B^{2} \left( 3 A + \sqrt{3} D \right) \right) \Omega^{2} \pi+ 1056 A \Omega^{4} \pi^{2}}{924 \pi^{3/2}}.
\end{align*}
To simplify the equation even further, we can use the normalization condition \eqref{eq:normalization_condition} to eliminate the $C$ variable $F(A,B,C,D)=F(A,B,\sqrt{1-A^2-B^2-D^2},D)=:G(A,B,D)$.

By \cref{lem:projection2} and the invariance of the Casimirs under rotations, we have that for all $t\geq0$ the following estimate holds
\begin{equation}
    |G(A(t),B(t),D(t))-G(A^{RH},B^{RH},D^{RH})|\leq C\|\omega_0-\omega^{RH}_0\|_2,
    \label{eq:final_Ck}
\end{equation}
where we are writing up to rotation
$$
\omega_0^{RH}(R^u_{e_3}x)=\Omega x_3 + A^{RH} Y_{2,0} + B^{RH} Y_{2,1} + C^{RH} Y_{2,-2} + D^{RH} Y_{2,2}.
$$
Computing the gradient of $G$, we have 
\begin{equation*}
    \det(\nabla G(A,B,D)) = \frac{480 \sqrt{3} \, B^3 \, \Omega^4 \left(-15 + 44 \Omega^2 \pi\right)}{3773 \pi},
\end{equation*}
which only vanishes when $B=0$ or $\Omega^2=\frac{15}{44\pi}$. Applying the inverse function theorem \cref{lemma:grad_neq0} we can conclude that there exists a constant $K>0$ and $\eps_0>0$ such that if $|(A(t),B(t),D(t))-(A^{RH},B^{RH},D^{RH})|<\eps_0$, then we have the estability estimate
\begin{equation*}
    |(A(t),B(t),D(t))-(A^{RH},B^{RH},D^{RH})|\leq C|G(A(t),B(t),D(t))-G(A^{RH},B^{RH},D^{RH})|.
\end{equation*}
Under the assumption that the initial perturbation is sufficiently small $0<\bar\eps<\frac{1}{K}\eps_0$, for some $K$ large enough, we can assert that
\begin{equation*}
|(A(0),B(0),D(0))-(A^{RH},B^{RH},D^{RH})|\le \|\omega_0-\omega_0^{RH}\|_2<\bar\eps\le \eps_0.
\end{equation*}
By continuity of the functions $A(t),B(t),D(t)$, we can conclude that $|(A(t),B(t),D(t))-(A^{RH},B^{RH},D^{RH})|<\bar\eps$ for $t$ small enough. Hence, we have that for $t$ small enough the following estimate holds
\begin{eqnarray*}
    \frac{1}{C}\inf_{s\in[0,\infty)}\|\tilde\omega_t-\omega^{RH}_s \|_2
    &\leq& |(A(t),B(t),D(t))-(A^{RH},B^{RH},D^{RH})|\\
    &\leq& C|G(A(t),B(t),D(t))-G(A^{RH},B^{RH},D^{RH})|\\
    &<& C c \|\omega_0 - \omega_0^{RH}\|_2\le C \bar\eps<\eps_0.
\end{eqnarray*}
Hence, we can conclude that $|(A(t),B(t),D(t))-(A^{RH},B^{RH},D^{RH})|<\eps_0$ for all $t\geq0$, and we can apply the inverse function theorem for all times. As long as
$$|\omega^{RH}_{2,1}|^2+|\omega^{RH}_{2,-1}|^2=|B^{RH}|^2\neq0\quad\text{and}\quad |\Omega|^2\neq \frac{15}{44\pi},$$
using \cref{lem:projection2} we have the desired stability estimate
\begin{equation*}
    \inf_{s\in[0,\infty)}\|\omega_t-\omega^{RH}_s\|_2\leq C\|\omega_0 - \omega_0^{RH}\|_2 .
\end{equation*}

We can get rid of the technical condition $|\Omega|^2\neq \frac{15}{44\pi}$ by considering different combinations of Casimirs to define the map $F$. Indeed, we can exhaust the domain of parameters $(\Omega,A,D)$ by considering different combinations of Casimirs.
Setting $|\Omega|^2=\frac{15}{44\pi}$, we compute
\begin{eqnarray*}
    \det(\nabla G_{\{C_3,C_4,C_7\}}(A,B,D))&=&\frac{3375 \sqrt{3}\, B^{3} }{1695694 \pi^{4}}\left( -\frac{18450}{121} + \frac{150}{11} \left( 9 + 8 A^{2} + 6 B^{2} \right) \right),\\
    \det(\nabla G_{\{C_3,C_4,C_6\}}(A,B,D))&=&-\frac{16875 \sqrt{5}\, B^{3}}{5934929 \pi^{7/2}}\left( 72 \sqrt{3}\, A^{3} + \sqrt{3}\, A \left( -\tfrac{984}{11} + 81 B^{2} \right) + 81 B^{2} D \right),\\
    \det(\nabla G_{\{C_3,C_4,C_8\}}(A,B,D))&=&\frac{3375 \sqrt{5}\, B^{3}}{14413399 \pi^{9/2}}\left( -\tfrac{22800}{11} \sqrt{3}\, A^{3} + \sqrt{3}\, A \left( \tfrac{32400}{11} - \tfrac{25650}{11} B^{2} \right) - \tfrac{25650}{11} B^{2} D \right),\\
    \det(\nabla G_{\{C_3,C_4,C_9\}}(A,B,D))&=&\frac{10125 B^{3}}{1095418324 \pi^{5}}\Big[-144000 \sqrt{3}\, A^{6} - 800 \sqrt{3}\, A^{4} \left( -\tfrac{3540}{11} + 405 B^{2} \right)\\
    &&- 30 \sqrt{3}\, A^{2} \left( \tfrac{36000}{121} - \tfrac{106200}{11} B^{2} + 6075 B^{4} \right) - 324000 A^{3} B^{2} D\\
    &&- 900 A B^{2} \left( -\tfrac{3540}{11} + 405 B^{2} \right) D+3 \sqrt{3} \left( -\tfrac{8424000}{1331} + \tfrac{2394000}{121} B^{2} - 20250 B^{4} D^{2} \right)\Big].
\end{eqnarray*}
For $B\neq 0$ there is no point in $\Omega,A,D$ where all the determinants vanish. Therefore, repeating the proof above with the appropriate combination of Casimirs, we can prove quantitative orbital stability for all $\Omega\neq0$ and $B\neq0$.

\subsubsection{The degenerate case \eqref{eq:degenerate_case}} \label{subsec:second_step}
We consider the degenerate case \eqref{eq:degenerate_case}, that is to say the case $\omega^{RH}_{2,-1}=\omega^{RH}_{2,1}=0$.

Again, we reduce the problem to studying the evolution of the Fourier coefficients in the second shell of $\tilde\omega_t$. By performing a rotation on the axis $p=e_3$, we re-write the projected solution using a different representation \eqref{eq:2ndrep} for convenience. Namely, there exists $u\in\R$ such that
\begin{equation}
    \tilde\omega_t(R^u_{e_3}x)=\Omega\cos\theta+ A(t)Y_{2,0} + B(t)Y_{2,1}+ C(t)Y_{2,-1} + D(t)Y_{2,2}, \label{eq:representation_2_first_two_shells}
\end{equation}
for some suitable functions of time $A,B,C,D$, with the normalization condition. Again with this representation, we consider the mapping $F:\mathbb{S}^4\to\erre^3$ defined by
\begin{equation*}
    F(A,B,C,D)=\begin{pmatrix}C_3(\omega_{A,B,C,D}),C_4(\omega_{A,B,C,D}),C_5(\omega_{A,B,C,D})\end{pmatrix},
\end{equation*}
where $C_k$ are the standard polynomial Casimirs and $\omega_{A,B,C,D}$ is the vorticity function defined by
\begin{equation*}
    \omega_{A,B,C,D}(x):=\Omega x_3 + A Y_{2,0}(x) + B Y_{2,1}(x) + C Y_{2,-1}(x) + D Y_{2,2}(x).
\end{equation*}
Explicitly, we can compute the Casimirs $C_3,C_4,C_5$ which are used to define the map $F$. Computing the integrals, we get
\begin{align*}
    \int_{\mathbb{S}^2}(\omega_{A,B,C,D})^3\;\mathrm{d}\mathcal{S}=&\frac{10 A^{3} + 15 \sqrt{3} (B - C) (B + C) D + 15 A \left( B^{2} + C^{2} - 2 D^{2} \right) + 56 A \Omega^{2} \pi}{14 \sqrt{5 \pi}}, \\
    \int_{\mathbb{S}^2}(\omega_{A,B,C,D})^4\;\mathrm{d}\mathcal{S}=&\frac{2}{7} \left( 11 A^{2} + 9 \left( B^{2} + C^{2} \right) + 3 D^{2} \right) \Omega^{2} + \frac{15 \left( A^{2} + B^{2} + C^{2} + D^{2} \right)^{2}}{28 \pi} + \frac{4 \Omega^{4} \pi}{5}, \\
    \int_{\mathbb{S}^2}(\omega_{A,B,C,D})^5\;\mathrm{d}\mathcal{S}=&\sqrt{5}\frac{75 (A^{2} + B^{2} + C^{2} + D^{2}) \left( 2 A^{3} + 3 \sqrt{3} (B - C)(B + C) D + 3 A \left( B^{2} + C^{2} - 2 D^{2} \right) \right)}{924 \pi^{3/2}}\\
    &+\sqrt{5}\frac{220 \left( 8 A^{3} + 9 A \left( B^{2} + C^{2} \right) + 3 \sqrt{3} (B - C)(B + C) D \right) \Omega^{2} \pi+ 1056 A \Omega^{4} \pi^{2}}{924 \pi^{3/2}}.
\end{align*}
Analogously to the previous section, we get rid of the $C$ variable by using the normalization condition \eqref{eq:normalization_condition}. Namely, we set $G(A,B,D):=F(A,B,\sqrt{1-A^2-B^2-D^2},D)$. In the case $\omega^{RH}_{2,-1}=\omega^{RH}_{2,1}=0$, under the representation \eqref{eq:2ndrep} we have that
$$
\omega^{RH}_0(R^u_{e_3}x)=\Omega x_3 + A^{RH} Y_{2,0}+ D^{RH} Y_{2,2},
$$
as $B^{RH}=C^{RH}=0$. By the normalization condition, this also implies that $|D^{RH}|^2=1-|A^{RH}|^2$.

The degeneracy of this case follows from the fact that the Jacobian $\nabla F(A^{RH},0,D^{RH})\in \R^{3\times 3}$ is always rank deficient; the middle column vanishes identically. Still, we can show that the rank is generically two away from the zonal case $|A^{RH}|^2=1$. Indeed, for $|A^{RH}|^2\ne 1$, $\Omega\ne 0$ and $|\Omega|^2\ne \frac{15}{176\pi}$, we have that $\mathrm{rank}(\nabla F(A^{RH},0,D^{RH}))=2$. This means that the kernel of the gradient is one-dimensional given by $\mathrm{span}\{e_2\}$. Computing, we can check that the fold point condition of \cref{lemma:ker_1dim} is satisfied, when $|A^{RH}|^2\ne 1$, $\Omega\ne 0$ and $|\Omega|^2\ne \frac{15}{176\pi}$. Applying \cref{lemma:ker_1dim}, there exists $\eps_0>0$ such that if $|(A(t),B(t),D(t))-(A^{RH},0,D^{RH})|<\eps_0$, then we can apply \cref{lemma:ker_1dim} to get
\begin{equation*}
    |(A(t),B(t),D(t))-(A^{RH},0,D^{RH})|\leq C|G(A(t),B(t),D(t))-G(A^{RH},0,D^{RH})|^{1/2}.
\end{equation*}
Under the assumption that the initial perturbation is sufficiently small $0<\bar\eps<\frac{1}{K}\eps_0^2$, for some $K$ large enough, we can ensure that the following estimates hold for all time. In fact, applying \cref{lem:projection2} and the invariance of the Casimirs under rotations, we have that for all $t\geq0$ the following estimate holds
\begin{eqnarray*}
    \frac{1}{C}\inf_{s\in[0,\infty)}\|\omega_t-\omega^{RH}_s\|_2&<&|(A(t),B(t),D(t))-(A^{RH},B^{RH},D^{RH})|\\
    &<&|G(A(t),B(t),D(t))-G(A^{RH},B^{RH},D^{RH})|^{1/2}\\
    &\leq& C\|\omega_0-\omega^{RH}_0\|_2^{1/2}.
    \label{eq:final_Ck_degenerate}
\end{eqnarray*}

To finish a full argument, we need to deal with are left with the cases $|A^{RH}|^2=1$, and $|\Omega|^2= \frac{15}{176\pi}$. The case $|A^{RH}|^2=1$ corresponds to the case of zonal flows, which we do in the next section as it requires a different representation of the solution after rotation. The last case $|\Omega|^2= \frac{15}{176\pi}$ can be treated by using different combinations of Casimirs to define the map $F$, as done in the previous section. In fact, computing we can show that for $|A^{RH}|^2\ne 1$ and $\Omega\ne 0$, the fold point condition of \cref{lemma:ker_1dim} is satisfied by either the map constructed above using the Casimirs $\{C_3,C_4,C_5\}$, or by the maps constructed using the Casimirs $\{C_3,C_4,C_6\}$ and $\{C_3,C_4,C_7\}$.

\subsubsection{The Zonal case $|\omega^{RH}_{2,0}|^2=1$} \label{zonal_case}
Finally, we are only left to address the case of a zonal flow
\begin{equation*}
    \bar\omega_0^{RH}=\Omega\cos\theta+\omega^{RH}_{2,0}Y_{2,0}.
\end{equation*}
As before, we reduce the problem to studying the evolution of the Fourier coefficients in the second shell of $\tilde\omega_t$. By performing a rotation on the axis $p=e_3$, we re-write the projected solution using a different representation \eqref{eq:3rdrep} for convenience. Namely, there exists $u\in\R$ such that
\begin{equation*}
    \tilde\omega_t(R_{e_3}^u x)=\Omega\cos\theta+ A(t) \sqrt{\frac{5}{16\pi}} \left( 3 \cos^2\theta - 1 \right) + B(t) \sqrt{\frac{15}{16\pi}} \sin(2\theta) \cos(\phi + \alpha(t)) + D(t)\sqrt{\frac{15}{16\pi}} \sin^2\theta \cos(2\phi),
\end{equation*}
for some suitable functions of time $A,B,D,\alpha$, with the normalization condition \eqref{eq:normalization_condition} $A^2+B^2+D^2=1$. In this representation, we consider the mapping $F:\mathbb{S}^2\times \R \to\erre^2$ defined only by the casimirs $C_3,C_4$ as before. Explicitly, we can compute the Casimirs $C_3,C_4$ which are used to define the map $F$. Computing the integrals, we get
\begin{align*}
    \int_{\mathbb{S}^2}(\omega_{A,B,D,\alpha})^3\;\mathrm{d}\mathcal{S}=&\frac{A \left(10 A^2 + 15 B^2 - 30 D^2 + 56 \Omega^2 \pi \right) + 15 \sqrt{3} B^2 D \cos(2\alpha)}{14 \sqrt{5 \pi}},\\
    \int_{\mathbb{S}^2}(\omega_{A,B,D,\alpha})^4\;\mathrm{d}\mathcal{S}=&\frac{2}{7} \left(11 A^2 + 9 B^2 + 3 D^2 \right) \Omega^2 + \frac{15 \left(A^2 + B^2 + D^2 \right)^2}{28 \pi} + \frac{4 \Omega^4 \pi}{5}.\\
\end{align*}
Noticing that the dependence on $B$ is only through $B^2$, we can use the normalization condition $B^2=1-A^2-D^2$, to define the smooth function $G(A,D,\alpha)=F(A,\sqrt{1-A^2-D^2},D,\alpha)$. By applying the invariance of the Casimirs under rotations and \cref{lem:projection2}, we have that for all $t\geq0$ the following estimate holds
\begin{eqnarray}
    |G(A(t),D(t),\alpha(t))-G(A^{RH},0,\alpha^{RH})|\leq C \|\omega_0-\omega_0^{RH}\|_2.
\end{eqnarray}
As we are in the case $B^{RH}=0$, we note that we have the freedom to choose $\alpha^{RH}\in \R$, as it does not appear in the expression of 
$$
\omega_0^{RH}=\Omega\cos\theta+A^{RH}\sqrt{\frac{5}{16\pi}}(3\cos^2\theta-1).
$$ 
For each $t\geq0$, it is convenient to choose $\alpha^{RH}=\alpha(t)$ to simplify the analysis of stability. So we are interested in studying the Jacobian 
$$
\nabla_{A,D} F|_{(A,D)=(\pm 1,0)}=\begin{pmatrix}
\frac{8\Omega^2}{7} A^{RH} &0\\
0 & 0
\end{pmatrix},
$$ 
whose second column vanishes identically and has exactly rank 1 if $\Omega\ne 0$. We can check that the hypothesis and the fold point condition of \cref{lemma:ker_1dim} are satisfied uniformly in the value $\alpha$ and $\Omega\ne 0$. Therefore, we obtain that there exists $C>0$ and $\eps_0>0$ (uniform in $\alpha$) such that if $|(A(t),D(t))-(A^{RH},0)|<\eps_0$, then we can apply \cref{lemma:ker_1dim} to get the estimate
$$
    |(A(t),D(t))-(A^{RH},0)|\leq C|G(A(t),D(t),\alpha(t))-G(A^{RH},0,\alpha(t))|^{1/2}.
$$
Picking the initial perturbation sufficiently small $0<\bar\eps<\frac{1}{K}\eps_0^2$, for some $K$ large enough, we can ensure that the following estimates hold for all time. In fact, applying \cref{lem:projection2} and the invariance of the Casimirs under rotations, we have that for all $t\geq0$ the following estimate holds
\begin{eqnarray*}
    \frac{1}{C}\|\omega_t-\omega^{RH}_0\|_2&<&|(A(t),D(t))-(A^{RH},0)|\\
    &<&|G(A(t),D(t),\alpha(t))-G(A^{RH},0,\alpha(t))|^{1/2}\\
    &\leq& C\|\omega_0-\omega^{RH}_0\|_2^{1/2},
    \label{eq:final_Ck_zonal}
\end{eqnarray*}
provided that $A^{RH}=\pm1$, and $\Omega\ne 0$.
Here, we have used that the value of $\alpha(t)$ is irrelevant to bound the distance $\|\omega_t-\omega^{RH}_0\|_2$, as we are showing that $|B(t)|\le C\|\omega_0-\omega^{RH}_0\|_2^{1/2}$. This concludes the proof of Theorem \ref{thm:main}.

\subsection{Remarks and open problems}
We close by pointing out some observations about the argument employed in the proof of \cref{thm:main}. An intriguing matter regards the sharpness of the rate $\frac{1}{2}$, which is also an open problem in the case of the torus studied by Elgindi \cite[Section 1.2]{elgindi2023remarkstabilityenergymaximizers}.

In principle, the singularity points of the Jacobian of the mapping may depend both on the chosen representation of the vorticity function (see \cref{subsec:reduced_dof}), and on the chosen combination of Casimirs. We try to shed some light on this matter by fixing the representation
\begin{align}\label{eq:rmkrep}
    \tilde{\omega}_t=\Omega\cos\theta+ \sin\beta \sqrt{\frac{5}{16\pi}} \left( 3 \cos^2\theta - 1 \right)+ \cos\beta\sin\gamma \sqrt{\frac{15}{16\pi}} \sin(2\theta) \cos(\phi + \alpha) + \cos\beta\cos\gamma\sqrt{\frac{15}{16\pi}} \sin^2\theta \cos(2\phi),
\end{align}
for some suitable functions of time $\alpha,\beta,\gamma:[0,\infty)\to\esse^1$. The observation is that any mapping formed by Casimirs has a rank deficient Jacobian, if $|\omega_{2,-1}|=|\omega_{2,1}|=0$ corresponding to $\sin\gamma=0$. Indeed, given any combination of Casimirs $f=(f_1,f_2,f_3)\in\mathcal{C}^1(\erre;\erre^3)$, consider the mapping $F:\esse^1\times\esse^1\times\esse^1\to\erre^3$ given by
\begin{equation*}
    F(\alpha,\beta,\gamma)=\left(\int_{\esse^2}f_1\left(\omega_{\alpha,\beta,\gamma}\right)\;\mathrm{d}\mathcal{S},\int_{\esse^2}f_2\left(\omega_{\alpha,\beta,\gamma}\right)\;\mathrm{d}\mathcal{S},\int_{\esse^2}f_3\left(\omega_{\alpha,\beta,\gamma}\right)\;\mathrm{d}\mathcal{S}\right).
\end{equation*}
We notice that the column of the Jacobian $\nabla F(\alpha,\beta,\gamma)$ with respect to the variable $\alpha$ always vanishes when $\sin\gamma=0$, since
\begin{equation*}
    0=\partial_\alpha F_k(\alpha,\beta,\gamma)=-\int_{\esse^2}f_k'\biggl(\mathbb{P}_*\mathbb{P}_{\leq2}\omega(\alpha,\beta,\gamma)\biggr)\cos\beta\sin\gamma \sqrt{\frac{15}{16\pi}} \sin(2\theta) \sin(\phi + \alpha)\;\mathrm{d}\mathcal{S}.
\end{equation*}
Summing up, in the representation \eqref{eq:rmkrep} the case $|\omega_{2,-1}|=|\omega_{2,1}|=0$ is always a singular point for the Jacobian of the constructed mapping, no matter the choice of Casimirs.

\section*{Acknowledgements}
The authors thank Michele Coti Zelati for useful discussions and relentless guidance. LM acknowledges support from the Engineering and Physical Sciences Research Council (EPSRC). The research of MGD was partially supported by NSF-DMS-2205937. The computations in this paper were carried out using Wolfram Mathematica.

\bibliographystyle{abbrvnat}
\bibliography{ref}

\begin{thebibliography}{13}
\providecommand{\natexlab}[1]{#1}
\providecommand{\url}[1]{\texttt{#1}}
\expandafter\ifx\csname urlstyle\endcsname\relax
  \providecommand{\doi}[1]{doi: #1}\else
  \providecommand{\doi}{doi: \begingroup \urlstyle{rm}\Url}\fi

\bibitem[Cao et~al.(2023)Cao, Wang, and Zuo]{cao2023stabilitydegree2rossbyhaurwitzwaves}
D.~Cao, G.~Wang, and B.~Zuo.
\newblock Stability of degree-2 {R}ossby--{H}aurwitz waves.
\newblock \emph{arXiv Preprint}, 2023.
\newblock arXiv:2305.03279.

\bibitem[Constantin and Germain(2022)]{Costantin2022}
A.~Constantin and P.~Germain.
\newblock Stratospheric planetary flows from the perspective of the {E}uler equation on a rotating sphere.
\newblock \emph{Arch. Ration. Mech. Anal.}, 245\penalty0 (1):\penalty0 587--644, 2022.
\newblock \doi{10.1007/s00205-022-01767-1}.

\bibitem[Constantin et~al.(2025)Constantin, Germain, Lin, and Zhu]{constantin2025onsetinstabilityzonalstratospheric}
A.~Constantin, P.~Germain, Z.~Lin, and H.~Zhu.
\newblock The onset of instability for zonal stratospheric flows.
\newblock \emph{arXiv Preprint}, 2025.
\newblock arXiv:2406.12345.

\bibitem[Elgindi(2024)]{elgindi2023remarkstabilityenergymaximizers}
T.~M. Elgindi.
\newblock Remark on the stability of energy maximizers for the 2$d$ {E}uler equation on $\mathbb{T}^2$.
\newblock \emph{Comm. Pure Appl. Anal.}, 23\penalty0 (10):\penalty0 1562--1568, 2024.
\newblock \doi{10.3934/cpaa.2024070}.

\bibitem[Golubitsky and Guillemin(1973)]{golubitsky1973stable}
M.~Golubitsky and V.~Guillemin.
\newblock \emph{Stable Mappings and Their Singularities}, volume~14 of \emph{Graduate Texts in Mathematics}.
\newblock Springer-Verlag, New York, 1973.

\bibitem[Haurwitz(1940)]{haurwitz1940}
B.~Haurwitz.
\newblock The motion of atmospheric disturbances on the spherical {E}arth.
\newblock \emph{Journal of Marine Research}, 3\penalty0 (3):\penalty0 254--267, 1940.

\bibitem[Holton and Hakim(2012)]{holton2012}
J.~R. Holton and G.~J. Hakim.
\newblock \emph{An Introduction to Dynamic Meteorology}.
\newblock Academic Press, 5th edition, 2012.

\bibitem[Rossby(1939)]{rossby1939}
C.-G. Rossby.
\newblock Relation between variations in the intensity of the zonal circulation of the atmosphere and the displacements of the semi-permanent centers of action.
\newblock \emph{Journal of Marine Research}, 2\penalty0 (1):\penalty0 38--55, 1939.

\bibitem[Taylor(2016)]{Taylor2016}
M.~Taylor.
\newblock Euler equation on a rotating sphere.
\newblock \emph{J. Funct. Anal.}, 270:\penalty0 3884--3945, 2016.
\newblock \doi{10.1016/j.jfa.2016.03.012}.

\bibitem[Wang(2025)]{wang2025orbitalstabilitylaplacianeigenstates}
G.~Wang.
\newblock Orbital stability of first {L}aplacian eigenstates for the {E}uler equation on flat 2-tori.
\newblock \emph{arXiv Preprint}, 2025.
\newblock arXiv:2509.00750.

\bibitem[Wang and Zuo(2023)]{Wang2023}
G.~Wang and B.~Zuo.
\newblock Nonlinear stability of sinusoidal {E}uler flows on a flat two-torus.
\newblock \emph{Calc. Var. Partial Differential Equations}, 62\penalty0 (7):\penalty0 207, 2023.
\newblock \doi{10.1007/s00526-023-02523-2}.

\bibitem[Wirosoetisno and Shepherd(1999)]{Wirosoetisno1999}
D.~Wirosoetisno and T.~G. Shepherd.
\newblock Nonlinear stability of {E}uler flows in two-dimensional periodic domains.
\newblock \emph{Geophys. Astrophys. Fluid Dyn.}, 90\penalty0 (3-4):\penalty0 229--246, 1999.
\newblock \doi{10.1080/03091929908203477}.

\bibitem[Xu(2023)]{xu2023nonzonalrossbyhaurwitzsolutions2d}
C.~Xu.
\newblock The non-zonal {R}ossby--{H}aurwitz solutions of the 2{D} {E}uler equations on a rotating ellipsoid.
\newblock \emph{arXiv Preprint}, 2023.
\newblock arXiv:2310.17854.

\end{thebibliography}
\end{document}